\providecommand{\U}[1]{\protect\rule{.1in}{.1in}}
\newtheorem{theorem}{Theorem}
\newtheorem{corollary}[theorem]{Corollary}
\newtheorem{definition}[theorem]{Definition}
\newtheorem{lemma}[theorem]{Lemma}
\newtheorem{proposition}[theorem]{Proposition}
\newtheorem{remark}[theorem]{Remark}
\newenvironment{proof}[1][Proof]{\noindent\textbf{#1.} }{\ \rule{0.5em}{0.5em}}
\begin{document}

\title{{\huge Multivalued backward stochastic differential equations with oblique
subgradients}\thanks{The work for this paper was supported by founds from the
Grant PN-II-ID-PCE-2011-3-0843, \textit{Deterministic and stochastic systems
with state constraints}, no. 241/05.10.2011.\newline$\clubsuit$Corresponding
author\newline e-mail: masigassous@gmail.com, aurel.rascanu@uaic.ro,
eduard.rotenstein@uaic.ro}\medskip}
\author{Anouar Gassous$^{\text{ }a}$, Aurel R\u{a}\c{s}canu$^{\text{ }a,b}$, Eduard
Rotenstein$^{\text{ }a,\clubsuit}$\bigskip\\$^{a\text{ }}${\small Faculty of Mathematics, "Alexandru Ioan Cuza"
University, Bd. Carol I no. 9-11, Ia\c{s}i, Rom\^{a}nia}\\$^{b\text{ }}${\small Institut of Mathematics of the Romanian Academy,
Ia\c{s}i branch, Bd. Carol I no. 8, Rom\^{a}nia}\medskip}
\maketitle

\begin{abstract}
We study the existence and uniqueness of the solution for the following
backward stochastic variational inequality with oblique reflection (for short,
$BSVI\left(  H(t,y),\varphi,F\right)  $), written under differential form

\[
\left\{
\begin{array}
[c]{l}%
-dY_{t}+H\left(  t,Y_{t}\right)  \partial\varphi\left(  Y_{t}\right)  \left(
dt\right)  \ni F\left(  t,Y_{t},Z_{t}\right)  dt-Z_{t}dB_{t},\quad t\in\left[
0,T\right]  ,\smallskip\\
Y_{T}=\eta,
\end{array}
\right.
\]
where $H$ is a bounded symmetric smooth matrix and $\varphi$ is a proper
convex lower semicontinuous function, with $\partial\varphi$ being its
subdifferential operator. The presence of the product $H\partial\varphi$ does
not permit the use of standard techniques because it does conserve neither the
Lipschitz property of the matrix nor the monotonicity property of the
subdifferential operator. We prove that, if we consider the dependence of $H$
only on the time, the equation admits a unique strong solution and, allowing
the dependence also on the state of the system, the above $BSVI\left(
H(t,y),\varphi,F\right)  $ admits a weak solution in the sense of the
Meyer-Zheng topology. However, for that purpose we must renounce at the
dependence on $Z$ for the generator function and we situate our problem in a
Markovian framework.\bigskip

\end{abstract}

\textbf{Keywords and phrases: }multivalued\textbf{ }backward stochastic
differential equations, oblique reflection, subdifferential operators,
Meyer-Zheng topology\bigskip

\textbf{2010} \textbf{AMS Subject Classification: }60H10, 60H30, 93E03\bigskip

\section{Introduction}

Backward stochastic differential equations (in short BSDE's) were first
introduced by Bismut in 1973 in the paper \cite{Bismut:73} as equation for the
adjoint process in the stochastic version of Pontryagin maximum principle. In
1990, Pardoux and Peng \cite{Pardoux/Peng:90} generalized and consecrated the
well known now notion of nonlinear backward stochastic differential equation
and they provided existence and uniqueness results for the solution of this
kind of equation. Starting with the paper of Pardoux and Peng
\cite{Pardoux/Peng:92}, a stochastic approach to the existence problem of a
solution for many types of deterministic partial differential equations has
been developed. Since then the interest in BSDEs has kept growing, both in the
direction of generalization of the emerging equations and construction of
approximation schemes for them. BSDEs have been widely used as a very useful
instrument for modelling various physical phenomena, in stochastic control and
especially in mathematical finance, as any pricing problem, by replication,
can be written in terms of linear BSDEs, or non-linear BSDEs with portfolios
constraints. Pardoux and R\u{a}\c{s}canu \cite{Pardoux/Rascanu:98} proved,
using a probabilistic interpretation, the existence of the viscosity solution
for a multivalued PDE (with subdifferential operator) of parabolic and
elliptic type.

Backward stochastic variational inequalities (for short, BSVIs) were first
analyzed by Pardoux and R\u{a}\c{s}canu in \cite{Pardoux/Rascanu:98} and
\cite{Pardoux/Rascanu:99} (the extension for Hilbert spaces case), by using a
method that consisted of a penalizing scheme, followed by its convergence.
Even though this type of penalization approach is very useful when dealing
with multivalued backward stochastic dynamical systems governed by a
subdifferential operator, it fails when dealing with a general maximal
monotone operator. This motivated a new approach for the later case of
equations, via convex analysis instruments. In \cite{Rascanu/Rotenstein:11},
R\u{a}\c{s}canu and Rotenstein established, using the Fitzpatrick function, a
one-to-one correspondence between the solutions of those types of equations
and the minimum points of some proper, convex, lower semicontinuous functions,
defined on well-chosen Banach spaces.

Multi-dimensional backward stochastic differential equations with oblique
reflection (in fact BSDEs reflected on the boundary of a special unbounded
convex domain along an oblique direction), which arises naturally in the study
of optimal switching problem were recently studied by Hu and Tang in
\cite{Hu/Tang:10}. As applications, the authors apply the results to solve the
optimal switching problem for stochastic differential equations of functional
type, and they give also a probabilistic interpretation of the viscosity
solution to a system of variational inequalities.

It worth mentioning that, until now, even for quite complex problems like the
ones analyzed by Maticiuc and R\u{a}\c{s}canu in \cite{Maticiuc/Rascanu:07} or
\cite{Maticiuc/Rascanu:10}, when dealing with BSVIs, the reflection was made
upon the normal direction at the frontier of the domain and it was caused by
the presence of the subdifferential operator of a convex lower semicontinuous
function. As the main achievement of this paper we prove the existence and
uniqueness of the solution for the more general BSVI with oblique subgradients%
\[
\left\{
\begin{array}
[c]{l}%
-dY_{t}+H\left(  t,Y_{t}\right)  \partial\varphi\left(  Y_{t}\right)  \left(
dt\right)  \ni F\left(  t,Y_{t},Z_{t}\right)  dt-Z_{t}dB_{t},\quad t\in\left[
0,T\right]  ,\smallskip\\
Y_{T}=\eta,
\end{array}
\right.
\]
where $B$ is a standard Brownian motion defined on a complete probability
space, $F$ is the generator function and the random variable $\eta$ is the
terminal data. The term $H(X)$ acts on the set of subgradients, fact which
will determine a oblique reflection for the feedback process. A similar setup
was constructed and studied for forward stochastic variational inequalities by
Gassous, R\u{a}\c{s}canu and Rotenstein in
\cite{Gassous/Rascanu/Rotenstein:12} by considering first a (deterministic)
generalized Skorokhod problem with oblique subgradients, prior to the general
stochastic case. In the current paper the problems also rise when we operate
with the product $H\left(  t,Y_{t}\right)  \partial\varphi\left(
Y_{t}\right)  $, which does not inherit neither the monotonicity of the
subdifferential operator nor the Lipschitz property of the matrix involved,
problems which will be overcome by using different methods compared to the
ones used for subgradients reflected upon the normal direction. We will split
our problem into two new ones. For the situation when we have only a time
dependence for the matrix $H$ we obtain the existence of a strong solution,
together with the existence of a absolutely continuous feedback-subgradient
process. However, for the general case of a state dependence for $H$ we will
use tightness criteria in order to get a solution for the equation. In
\cite{Buckdahn/Engelbert/Rascanu:04}, Buckdahn, Engelbert and R\u{a}\c{s}canu
discussed the concept of weak solutions of a certain type of backward
stochastic differential equations (not multivalued). Using weak convergence in
the Meyer--Zheng topology, they provided a general existence result. We will
put also our problem into a Markovian framework. The problem consists in
answering in which sense can we take the limit in the sequence $\{\left(
Y^{n},Z^{n},U^{n}\right)  \}_{n}$, given by the solutions of the approximating
equations. We have to prove that it is tight in a certain topology. Even the
$S-$topology introduced by Jakubowski in \cite{Jakubowski:97} (and used for
similar setups by Boufoussi and Casteren \cite{Boufoussi/Casteren:04} or LeJai
\cite{LeJai:02}) seems suitable for our context, the regularity of the
subgradient process given by the approximating equation as part of its
solution permits us to show a convergence in the sense of the Meyer-Zheng
topology, that is the laws converge weakly if we equip the space of paths with
the topology of convergence in $dt-$measure. The tightness of $\{Z^{n}\}_{n}$
is hard to get, therefore we renounce at the dependence on $Z$ for the
generator function of the multivalued backward equation. This framework
permits also to analyze the existence of viscosity solutions for systems of
parabolic variational inequalities driven by generalized subgradients.

The article is organized as follows. Section 2 presents the framework of our
study, the assumptions and the hypotheses on the data, the notions of weak and
strong solution for the equations and it closes with the enunciations of the
main results of the paper, the complete proofs representing the core of
Sections 4 and 5. Section 3 is dedicated to some useful apriori estimates for
the solutions of the approximating equations. Section 4 proves the strong
existence and uniqueness of the solution when the matrix $H$ does not depend
on the state of the system, while Section 5 deals with the existence of a weak
solution for the general case of $H=H(t,y)$. For the clarity of the
presentation, the last part of the paper groups together, under the form of an
Annex with three subsections, some useful results that are used throughout
this article.

\section{Setting the problem}

This section is dedicated to the construction of the problem that we will
study in the sequel. We present the hypothesis imposed on the coefficients and
we formulate the main results of this article. The proofs will be detailed in
the next three sections.

Let $T>0$ be fixed and consider the backward stochastic variational inequality
with oblique reflection (for short, we will write $BSVI\left(  H\left(
t,y\right)  ,\varphi,F\right)  $, $BSVI\left(  H\left(  t\right)
,\varphi,F\right)  $ or, respectively, $BSVI\left(  H\left(  y\right)
,\varphi,F\right)  $ if the matrix $H$ depends only on time or, respectively,
on the state of the system), $\mathbb{P}-a.s.~\omega\in\Omega$,%
\begin{equation}
\left\{
\begin{array}
[c]{l}%
Y_{t}+%
{\displaystyle\int_{t}^{T}}
H\left(  s,Y_{s}\right)  dK_{s}=\eta+%
{\displaystyle\int_{t}^{T}}
F\left(  s,Y_{s},Z_{s}\right)  ds-%
{\displaystyle\int_{t}^{T}}
Z_{s}dB_{s},\quad t\in\left[  0,T\right]  ,\smallskip\\
dK_{s}\in\partial\varphi\left(  Y_{s}\right)  \left(  ds\right)  ,
\end{array}
\right.  \label{main oblique BSVI}%
\end{equation}
where

\begin{enumerate}
\item[$\left(  H_{1}\right)  $] $\quad\left(  \Omega,\mathcal{F}%
,\mathbb{P},\{\mathcal{F}_{t}\}_{t\geq0}\right)  $ is a stochastic basis and
$\{B_{t}:t\geq0\}$ is a $\mathbb{R}^{k}-$valued Brownian motion. Moreover,
$\mathcal{F}_{t}=\mathcal{F}_{t}^{B}=\sigma(\{B_{s}:0\leq s\leq t\})\vee
\mathcal{N}$.

\item[$\left(  H_{2}\right)  $] $\quad H(\cdot,\cdot,y):\Omega\times
\mathbb{R}_{+}\rightarrow\mathbb{R}^{d\times d}$ is progressively measurable
for every $y\in\mathbb{R}^{d}$; there exists $\Lambda,b>0$ such that
$\mathbb{P-}a.s.$ $\omega\in\Omega$, $H=\left(  h_{i,j}\right)  _{d\times
d}\in C^{1,2}\left(  \mathbb{R}_{+}\mathbb{\times R}^{d};\mathbb{R}^{d\times
d}\right)  $ and, for all $t\in\left[  0,T\right]  $ and $y,\tilde{y}%
\in\mathbb{R}^{d}$, $\mathbb{P-}a.s.$ $\omega\in\Omega$,%
\begin{equation}
\left\{
\begin{array}
[c]{ll}%
\left(  i\right)  \quad & h_{i,j}\left(  t,y\right)  =h_{j,i}\left(
t,y\right)  ,\quad\forall i,j\in\overline{1,d},\medskip\\
\left(  ii\right)  \quad & \left\langle H\left(  t,y\right)  u,u\right\rangle
\geq a\left\vert u\right\vert ^{2},\quad\forall u\in\mathbb{R}^{d}\text{ (for
some }a\geq1\text{)},\medskip\\
\left(  iii\right)  \quad & |H\left(  t,\tilde{y}\right)  -H\left(
t,y\right)  |+|\left[  H\left(  t,\tilde{y}\right)  \right]  ^{-1}-\left[
H\left(  t,y\right)  \right]  ^{-1}|\leq\Lambda|\tilde{y}-y|,\medskip\\
\left(  iv\right)  \quad & |H\left(  t,y\right)  |+|\left[  H\left(
t,y\right)  \right]  ^{-1}|~\leq b,
\end{array}
\right.  \label{hypothesis on H}%
\end{equation}
where $\left\vert H\left(  x\right)  \right\vert \overset{def}{=}\left(
\sum_{i,j=1}^{d}\left\vert h_{i,j}\left(  x\right)  \right\vert ^{2}\right)
^{1/2}$. We denoted by $\left[  H\left(  t,y\right)  \right]  ^{-1}$ the
inverse matrix of $H\left(  t,y\right)  $. Therefore, $\left[  H\left(
t,y\right)  \right]  ^{-1}$ has the same properties (\ref{hypothesis on H}%
$-\left(  i\right)  ,\left(  ii\right)  $) as $H\left(  t,y\right)  $.

\item[$\left(  H_{3}\right)  $] $\quad$the function%
\[
\varphi:\mathbb{R}^{d}\rightarrow\left]  -\infty,+\infty\right]  \text{ is a
proper lower semicontinuous convex function.}%
\]

\end{enumerate}

The generator function $F\left(  \cdot,\cdot,y,z\right)  :\Omega\times\left[
0,T\right]  \rightarrow\mathbb{R}^{d}$ is progressively measurable for every
$\left(  y,z\right)  \in\mathbb{R}^{d}\times\mathbb{R}^{d\times k}$ and there
exist $L,\ell,\rho\in L^{2}\left(  0,T;\mathbb{R}_{+}\right)  $ such that%
\begin{equation}
\left\{
\begin{array}
[c]{l}%
\begin{array}
[c]{l}%
\left(  i\right)  \quad\text{\textit{Lipschitz conditions: for all}
}y,y^{\prime}\in\mathbb{R}^{d},\;z,z^{\prime}\in\mathbb{R}^{d\times
k},\;d\mathbb{P}\otimes dt-a.e.:\medskip\\
\quad\quad\quad%
\begin{array}
[c]{l}%
\left\vert F(t,y^{\prime},z)-F(t,y,z)\right\vert \leq L\left(  t\right)
|y^{\prime}-y|\text{,}\medskip\\
|F(t,y,z^{\prime})-F(t,y,z)|\leq\ell\left(  t\right)  |z^{\prime}-z|\text{;}%
\end{array}
\end{array}
\medskip\\%
\begin{array}
[c]{l}%
\left(  ii\right)  \quad\text{\textit{Boundedness condition:}}\medskip\\
\quad\quad~%
\begin{array}
[c]{ll}
& \left\vert F\left(  t,0,0\right)  \right\vert \leq\rho\left(  t\right)
,\quad d\mathbb{P}\otimes dt-a.e.\text{.}%
\end{array}
\end{array}
\end{array}
\right.  \tag{$H_4$}\label{hypothesis on F}%
\end{equation}
$\smallskip$

Denote by $\partial\varphi$ the subdifferential operator of $\varphi$:%
\[
\partial\varphi\left(  x\right)  \overset{def}{=}\left\{  \hat{x}\in
\mathbb{R}^{d}:\left\langle \hat{x},y-x\right\rangle +\varphi\left(  x\right)
\leq\varphi\left(  y\right)  ,\text{ for all }y\in\mathbb{R}^{d}\right\}
\]
and $Dom\left(  \partial\varphi\right)  =\{x\in\mathbb{R}^{d}:\partial
\varphi\left(  x\right)  \neq\emptyset\}$. We will use the notation
$(x,\hat{x})\in\partial\varphi$ in order to express that $x\in Dom\left(
\partial\varphi\right)  $ and $\hat{x}\in\partial\varphi\left(  x\right)  $.
The vector given by the quantity $H\left(  x\right)  \hat{x}$, with $\hat
{x}\in\partial\varphi\left(  x\right)  $ will be called in what follows
\textit{oblique subgradient}.

\begin{remark}
If $E$ is a closed convex subset of $\mathbb{R}^{d}$ then the convex indicator
function%
\[
\varphi\left(  x\right)  =I_{E}\left(  x\right)  =\left\{
\begin{array}
[c]{cc}%
0, & \text{if }x\in E,\smallskip\\
+\infty, & \text{if }x\notin E,
\end{array}
\right.
\]
is a convex l.s.c. function and, for $x\in E$,%
\[
\partial I_{E}\left(  x\right)  =\{\hat{x}\in\mathbb{R}^{d}:\left\langle
\hat{x},y-x\right\rangle \leq0,\quad\forall y\in E\}=N_{E}\left(  x\right)  ,
\]
where $N_{E}\left(  x\right)  $ is the closed external normal cone to $E$ at
$x$. We have $N_{E}\left(  x\right)  =\emptyset$ if $x\notin E$ and
$N_{E}\left(  x\right)  =\{0\}$ if $x\in int(E)$ (we denote by $int(E)$ the
interior of the set $E$).
\end{remark}

\noindent We shall call \textit{oblique reflection directions} at time $t$ the
vectors given by%
\[
\nu_{t,x}=H\left(  t,x\right)  n_{x},\quad x\in Bd\left(  E\right)  ,
\]
where $n_{x}\in N_{E}\left(  x\right)  $ (we denote by $Bd(E)$ the boundary of
the set $E$).

Let $k:\left[  t,T\right]  \rightarrow\mathbb{R}^{d}$, where $0\leq t\leq T$.
We denote, $\left\Vert k\right\Vert _{\left[  t,T\right]  }\overset{def}%
{=}\sup\left\{  \left\vert k\left(  s\right)  \right\vert :t\leq s\leq
T\right\}  $ and, for $t=0$, $\left\Vert k\right\Vert _{T}\overset{def}%
{=}\left\Vert k\right\Vert _{\left[  0,T\right]  }$. Considering
$\mathcal{D}\left[  t,T\right]  $ the set of the partitions of the time
interval $\left[  t,T\right]  $, of the form $\Delta=(t=t_{0}<t_{1}%
<...<t_{n}=T)$, let%
\[
S_{\Delta}(k)=%
{\displaystyle\sum\limits_{i=0}^{n-1}}
|k(t_{i+1})-k(t_{i})|
\]
and $\left\updownarrow k\right\updownarrow _{\left[  t,T\right]  }%
\overset{def}{=}\sup\limits_{\Delta\in\mathcal{D}}S_{\Delta}(k)$; if $t=0$,
denote $\left\updownarrow k\right\updownarrow _{T}\overset{def}{=}%
\left\updownarrow k\right\updownarrow _{\left[  0,T\right]  }$. We consider
the space of bounded variation functions $BV(\left[  0,T\right]
;\mathbb{R}^{d})=\{k~|~k:\left[  0,T\right]  \rightarrow\mathbb{R}^{d},$
$\left\updownarrow k\right\updownarrow _{T}<\infty\}.$ Taking on the space of
continuous functions $C\left(  \left[  0,T\right]  ;\mathbb{R}^{d}\right)  $
the usual supremum norm, we have the duality connection $(C(\left[
0,T\right]  ;\mathbb{R}^{d}))^{\ast}=\{k\in BV(\left[  0,T\right]
;\mathbb{R}^{d})~|~k(0)=0\}$, with the duality between these spaces given by
the Riemann-Stieltjes integral $\left(  y,k\right)  \mapsto\int_{0}%
^{T}\left\langle y\left(  t\right)  ,dk\left(  t\right)  \right\rangle .$ We
will say that a function $k\in BV_{loc}(\mathbb{R}_{+};\mathbb{R}^{d})$ if,
for every $T>0$, $k\in BV(\left[  0,T\right]  ;\mathbb{R}^{d})$.

\begin{definition}
Given two functions $x,k:\mathbb{R}_{+}\rightarrow$ $\mathbb{R}^{d}$ we say
that $dk\left(  t\right)  \in\partial\varphi\left(  x\left(  t\right)
\right)  \left(  dt\right)  $ if%
\[%
\begin{array}
[c]{ll}%
\left(  a\right)  \quad & x:\mathbb{R}_{+}\rightarrow\mathbb{R}^{d}%
\quad\text{is continuous,}\smallskip\\
\left(  b\right)  \quad &
{\displaystyle\int_{0}^{T}}
\varphi\left(  x\left(  t\right)  \right)  dt<\infty,\text{ for all }%
T\geq0,\smallskip\\
\left(  c\right)  \quad & k\in BV_{loc}\left(  \mathbb{R}_{+};\mathbb{R}%
^{d}\right)  ,\text{\quad}k\left(  0\right)  =0\text{,}\smallskip\\
\left(  d\right)  \quad &
{\displaystyle\int_{s}^{t}}
\left\langle y\left(  r\right)  -x(r),dk\left(  r\right)  \right\rangle +%
{\displaystyle\int_{s}^{t}}
\varphi\left(  x\left(  r\right)  \right)  dr\leq%
{\displaystyle\int_{s}^{t}}
\varphi\left(  y\left(  r\right)  \right)  dr,\smallskip\\
\multicolumn{1}{r}{} & \multicolumn{1}{r}{\text{for all }0\leq s\leq t\leq
T\quad\text{and }y\in C\left(  \left[  0,T\right]  ;\mathbb{R}^{d}\right)
.\smallskip}%
\end{array}
\]

\end{definition}

We introduce now the notion of solution for Eq.(\ref{main oblique BSVI}). We
will study two types of solution, given by the following Definitions. For the
case $H\left(  t,y\right)  \equiv H\left(  t\right)  $ we obtain the existence
of a strong solution while, for $H\left(  t,y\right)  $ we obtain a weak
solution for Eq.(\ref{main oblique BSVI}).

\begin{definition}
\label{Def of strong solution}Given $\left(  \Omega,\mathcal{F},\mathbb{P}%
,\{\mathcal{F}_{t}\}_{t\geq0}\right)  $ a fixed stochastic basis and
$\{B_{t}:t\geq0\}$ a $\mathbb{R}^{k}-$valued Brownian motion, we state that a
triplet $\left(  Y,Z,K\right)  $ is a strong solution of the $BSVI\left(
H\left(  t\right)  ,\varphi,F\right)  $ if $(Y,Z,K):\Omega\times\left[
0,T\right]  \rightarrow\mathbb{R}^{d}\times\mathbb{R}^{d\times k}%
\times\mathbb{R}^{d}$ are progressively measurable continuous stochastic
processes and $\mathbb{P}-a.s.~\omega\in\Omega$,%
\[
\left\{
\begin{array}
[c]{l}%
Y_{t}+%
{\displaystyle\int_{t}^{T}}
H\left(  s\right)  dK_{s}=\eta+%
{\displaystyle\int_{t}^{T}}
F\left(  s,Y_{s},Z_{s}\right)  ds-%
{\displaystyle\int_{t}^{T}}
Z_{s}dB_{s},\quad\forall t\in\left[  0,T\right]  ,\smallskip\\
dK_{s}\in\partial\varphi\left(  Y_{s}\right)  \left(  ds\right)  .
\end{array}
\right.
\]

\end{definition}

\noindent Consider now the case when the matrix $H$ depends on the state of
the system. We can reconsider the backward stochastic variational inequality
with oblique reflection in the following manner, $\mathbb{P}-a.s.~\omega
\in\Omega$,%
\begin{equation}
\left\{
\begin{array}
[c]{l}%
Y_{t}+%
{\displaystyle\int_{t}^{T}}
H\left(  s,Y_{s}\right)  dK_{s}=\eta+%
{\displaystyle\int_{t}^{T}}
F\left(  s,Y_{s},Z_{s}\right)  ds-\left(  M_{T}-M_{t}\right)  ,\quad\forall
t\in\left[  0,T\right]  ,\smallskip\\
dK_{s}\in\partial\varphi\left(  Y_{s}\right)  \left(  ds\right)  ,
\end{array}
\right.  \label{eq with martingale}%
\end{equation}
where $M$ is a continuous martingale (possible with respect to its natural
filtration if not any other filtration available). If%
\[
H\left(  \omega,t,y\right)  \equiv H\left(  t,y\right)  \quad\text{and}\quad
F\left(  \omega,t,y,z\right)  \equiv F(t,y,z)
\]
we introduce the notion of weak solution of the equation.

\begin{definition}
\label{Def of weak solution}If there exists a probability space $\left(
\Omega,\mathcal{F},\mathbb{P}\right)  $ and a triplet $\left(  Y,M,K\right)
:\Omega\times\left[  0,T\right]  \rightarrow(\mathbb{R}^{d})^{3}$ such that%
\[%
\begin{array}
[c]{cl}%
\begin{array}
[c]{c}%
\left(  a\right)  \bigskip\medskip\\
\end{array}
&
\begin{array}
[c]{l}%
M\text{ is a continuous martingale with respect to the filtration given, for
}\forall t\in\left[  0,T\right]  \text{, by}\\
\mathcal{F}_{t}\overset{def}{=}\mathcal{F}_{t}^{Y,M}=\sigma(\{Y_{s}%
,M_{s}:0\leq s\leq t\})\vee\mathcal{N}\text{,}\smallskip
\end{array}
\\
\left(  b\right)  & Y,K\text{ are c\`{a}dl\`{a}g stochastic processes, adapted
to }\{\mathcal{F}_{t}\}_{t\geq0}\text{,}\smallskip\\
\left(  c\right)  & \text{relation }(\ref{eq with martingale})\text{ is
verified for every }t\in\left[  0,T\right]  \text{, }\mathbb{P}-a.s.~\omega
\in\Omega\text{,}%
\end{array}
\]
the collection $(\Omega,\mathcal{F},\mathbb{P},\mathcal{F}_{t},Y_{t}%
,M_{t},K_{t})_{t\in\left[  0,T\right]  }$ is called a weak solution of the
$BSVI\left(  H\left(  y\right)  ,\varphi,F\right)  $.
\end{definition}

\noindent In both cases given by Definition \ref{Def of strong solution} or
Definition \ref{Def of weak solution} we will say that $(Y,Z,K)$ or $(Y,M,K)$
is a solution of the considered oblique reflected backward stochastic
variational inequality.\medskip

Now we are able to formulate the main results of this article. Denote%
\[
\nu_{t}=%
{\displaystyle\int_{0}^{t}}
L\left(  s\right)  \left[  \mathbb{E}^{\mathcal{F}_{s}}\left\vert
\eta\right\vert ^{p}\right]  ^{1/p}\quad\text{and}\quad\theta=\sup
_{t\in\left[  0,T\right]  }\left(  \mathbb{E}^{\mathcal{F}_{t}}\left\vert
\eta\right\vert ^{p}\right)  ^{1/p}~.
\]

\begin{theorem}
\label{Th. for strong existence}Let $p>1$ and the assumptions $(H_{1}-H_{4})$
be satisfied, with $l(t)\equiv l<\sqrt{a}$. If%
\begin{equation}
\mathbb{E}e^{\delta\theta}+\mathbb{E}\left\vert \varphi\left(  \eta\right)
\right\vert <\infty\label{boundedness of exp moments}%
\end{equation}
for all $\delta>0$ then the $BSVI\left(  H\left(  t\right)  ,\varphi,F\right)
$ admits a unique strong solution $\left(  Y,Z,K\right)  \in S_{d}^{0}\left[
0,T\right]  \times\Lambda_{d\times k}^{0}\left(  0,T\right)  \times S_{d}%
^{0}\left[  0,T\right]  $ such that, for all $\delta>0$,%
\begin{equation}
\mathbb{E}\sup\limits_{s\in\left[  0,T\right]  }e^{\delta p\nu_{s}}\left\vert
Y_{s}\right\vert ^{p}+\mathbb{E}\left(
{\displaystyle\int_{0}^{T}}
e^{2\delta\nu_{s}}\left\vert Z_{s}\right\vert ^{2}ds\right)  ^{p/2}<\infty.
\label{boundedness with weight}%
\end{equation}
Moreover, there exists a positive constant, independent of the terminal time
$T$, $C=C(a,b,\Lambda)$ such that, $\mathbb{P}-a.s.~\omega\in\Omega$,%
\[
\left\vert Y_{t}\right\vert \leq C\left(  1+\left[  \mathbb{E}^{\mathcal{F}%
_{t}}\left\vert \eta\right\vert ^{p}\right]  ^{1/p}\right)  ,\quad~\text{for
all }t\in\left[  0,T\right]
\]
and the process $K$ can be represented as%
\[
K_{t}=%
{\displaystyle\int_{0}^{t}}
U_{s}ds,
\]
where%
\[
\mathbb{E}\int_{0}^{T}|U_{t}|^{2}dt+\mathbb{E}\int_{0}^{T}|Z_{t}|^{2}dt\leq
C\left(  \mathbb{E}|\eta|^{2}+\mathbb{E}\left\vert \varphi\left(  \eta\right)
\right\vert +\mathbb{E}\int_{0}^{T}|F(t,0,0)|^{2}dt\right)  .
\]

\end{theorem}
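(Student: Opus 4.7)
The approach is by Moreau--Yosida penalization combined with a Lyapunov functional that exploits the symmetry of $H$ to restore monotonicity. For each $\varepsilon>0$ consider the classical Lipschitz BSDE
\begin{equation*}
Y^{\varepsilon}_{t} = \eta + \int_{t}^{T} F(s,Y^{\varepsilon}_{s},Z^{\varepsilon}_{s})\,ds - \int_{t}^{T} H(s)\nabla\varphi_{\varepsilon}(Y^{\varepsilon}_{s})\,ds - \int_{t}^{T} Z^{\varepsilon}_{s}\,dB_{s},
\end{equation*}
with $\nabla\varphi_{\varepsilon}$ the Yosida regularization of $\partial\varphi$; by Pardoux--Peng this admits a unique square-integrable adapted pair $(Y^{\varepsilon},Z^{\varepsilon})$. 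Standard It\^o on $|Y^{\varepsilon}|^{2}$ fails because $H\nabla\varphi_{\varepsilon}$ is neither monotone nor nicely Lipschitz; instead I work with the weighted quadratic form $V(t,y):=\langle H^{-1}(t)y,y\rangle$. Since $H^{-1}$ is symmetric, the oblique drift contribution collapses to the standard monotone expression
\begin{equation*}
2\langle H^{-1}(t)Y^{\varepsilon}_{t},\,H(t)\nabla\varphi_{\varepsilon}(Y^{\varepsilon}_{t})\rangle = 2\langle Y^{\varepsilon}_{t},\nabla\varphi_{\varepsilon}(Y^{\varepsilon}_{t})\rangle \;\geq\; 2\bigl(\varphi_{\varepsilon}(Y^{\varepsilon}_{t})-\varphi_{\varepsilon}(0)\bigr),
\end{equation*}
and this cancellation, made available by the symmetry in $(H_{2})$, is the device on which the whole argument rests.

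\emph{Apriori estimates, Cauchy passage and uniqueness.} Applying It\^o to $V(t,Y^{\varepsilon}_{t})$ and bounding $\partial_{t}H^{-1}$ together with $\mathrm{tr}(H^{-1}Z^{\varepsilon}(Z^{\varepsilon})^{\top})\geq b^{-1}|Z^{\varepsilon}|^{2}$ through $(H_{2})$, while using $\ell<\sqrt{a}$ to absorb the $Z$-Lipschitz part of $F$ via Young's inequality, produces bounds uniform in $\varepsilon$ for $\mathbb{E}\sup_{t}|Y^{\varepsilon}_{t}|^{2}$, $\mathbb{E}\int_{0}^{T}|Z^{\varepsilon}_{s}|^{2}ds$ and $\mathbb{E}\int_{0}^{T}\varphi_{\varepsilon}(Y^{\varepsilon}_{s})ds$. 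A second It\^o application, this time to $\varphi_{\varepsilon}(Y^{\varepsilon}_{t})$, uses the ellipticity $\langle u,Hu\rangle\geq a|u|^{2}$ at $u=\nabla\varphi_{\varepsilon}(Y^{\varepsilon}_{t})$ together with $\mathbb{E}|\varphi(\eta)|<\infty$ to deliver the $L^{2}$ bound on $U^{\varepsilon}:=\nabla\varphi_{\varepsilon}(Y^{\varepsilon})$, which ultimately gives the absolute continuity $K_{t}=\int_{0}^{t}U_{s}ds$. A third It\^o, now to $V(t,Y^{\varepsilon}_{t}-Y^{\varepsilon'}_{t})$, converts the oblique cross-difference into the classical Brezis defect
\begin{equation*}
\langle J_{\varepsilon}Y^{\varepsilon}_{t}-J_{\varepsilon'}Y^{\varepsilon'}_{t},\,U^{\varepsilon}_{t}-U^{\varepsilon'}_{t}\rangle \;\geq\; 0,
\end{equation*}
up to remainders of order $\varepsilon+\varepsilon'$ absorbed by the preceding $L^{2}$ bound on $U^{\varepsilon}$; this forces $(Y^{\varepsilon},Z^{\varepsilon})$ to be Cauchy, and the limit $(Y,Z,U)$ satisfies the definition with $dK_{s}=U_{s}ds\in\partial\varphi(Y_{s})(ds)$. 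Uniqueness follows from the very same It\^o identity applied to the difference of two putative solutions.

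\emph{Weighted estimates, pointwise bound and main obstacle.} For the weighted estimate (\ref{boundedness with weight}), apply It\^o to $e^{p\delta\nu_{t}}V(t,Y^{\varepsilon}_{t})^{p/2}$: the extra drift generated by $d\nu_{t}=L(t)[\mathbb{E}^{\mathcal{F}_{t}}|\eta|^{p}]^{1/p}dt$ precisely cancels the Lipschitz-in-$y$ contribution of $F$, while BDG combined with the exponential integrability (\ref{boundedness of exp moments}) closes the estimate in $L^{p}$. The $T$-independent pointwise bound $|Y_{t}|\leq C(1+[\mathbb{E}^{\mathcal{F}_{t}}|\eta|^{p}]^{1/p})$ is then obtained by running the $V$-identity on the subinterval $[t,T]$ only, taking $\mathbb{E}^{\mathcal{F}_{t}}$ and exploiting that the dissipative contribution $\langle Y_{s},dK_{s}\rangle\geq 0$ drops out without introducing any explicit $T$-scaling; $Y_{t}$ being $\mathcal{F}_{t}$-measurable, the $p$th root then yields the stated inequality with a constant depending only on $a,b,\Lambda$. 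The principal obstacle throughout is that $H\partial\varphi$ is neither monotone nor Lipschitz; the entire proof is made possible only by replacing the Euclidean norm by the $H^{-1}(t)$-weighted form $V$, whose use is legitimate precisely because $(H_{2})$ guarantees that $H$ is symmetric, uniformly elliptic, and $C^{1}$ in time.
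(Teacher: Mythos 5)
Your proposal is correct and follows essentially the same route as the paper: Yosida penalization, uniform $L^{2}$ bounds on $(Y^{\varepsilon},Z^{\varepsilon},\nabla\varphi_{\varepsilon}(Y^{\varepsilon}))$ obtained by combining It\^o on a quadratic functional with the stochastic subdifferential inequality for $\varphi_{\varepsilon}$, then a Cauchy argument and uniqueness based on the Brezis defect inequality after the symmetry of $H(t)$ is used to restore monotonicity. Your weighted form $V(t,y)=\langle H^{-1}(t)y,y\rangle$ is just $|H^{-1/2}(t)y|^{2}$, i.e.\ the square of the paper's transformed variable $\Delta_{s}=H_{s}^{-1/2}(Y_{s}^{\varepsilon}-Y_{s}^{\delta})$, so the key cancellation is identical.
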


\begin{remark}
The boundedness conditions imposed to the exponential moments from
(\ref{boundedness of exp moments}) is not a very restrictive one. For example,
it takes place if we consider $k=1$ and $\eta=B_{T}^{\alpha}$, with
$0<\alpha<2$.
\end{remark}

\begin{theorem}
\label{Th. for weak existence}Let the assumptions $(H_{2}-H_{4})$ be
satisfied. Then the $BSVI\left(  H\left(  t,y\right)  ,\varphi,F\right)  $
(\ref{main oblique BSVI}) admits a unique weak solution $(\Omega
,\mathcal{F},\mathbb{P},\mathcal{F}_{t},B_{t},Y_{t},M_{t},K_{t})_{t\in\left[
0,T\right]  }$.
\end{theorem}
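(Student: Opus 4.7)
The plan is to build the weak solution by Moreau--Yosida penalization of $\varphi$, combined with a Meyer--Zheng tightness argument for the approximating Markovian BSDEs, and to obtain uniqueness in law from the Markovian structure via the associated parabolic variational inequality. Throughout I assume, in line with the discussion in the introduction, that $F$ is $Z$-free and that $\eta=g(X_{T})$ for a forward diffusion $X$, so that the problem sits in a Markovian setting.

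First I would replace $\varphi$ by its Moreau--Yosida regularisation $\varphi_{\varepsilon}$, whose gradient $\nabla\varphi_{\varepsilon}$ is globally Lipschitz, and solve the penalized BSDE
\[
Y_{t}^{\varepsilon}=\eta+\int_{t}^{T}F(s,Y_{s}^{\varepsilon})\,ds-\int_{t}^{T}H(s,Y_{s}^{\varepsilon})\nabla\varphi_{\varepsilon}(Y_{s}^{\varepsilon})\,ds-\int_{t}^{T}Z_{s}^{\varepsilon}\,dB_{s}.
\]
Since the drift is Lipschitz in $y$ and independent of $z$, existence and uniqueness of $(Y^{\varepsilon},Z^{\varepsilon})$ follow from Pardoux--Peng theory. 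Setting $K_{t}^{\varepsilon}=\int_{0}^{t}\nabla\varphi_{\varepsilon}(Y_{s}^{\varepsilon})\,ds$ and $M_{t}^{\varepsilon}=\int_{0}^{t}Z_{s}^{\varepsilon}\,dB_{s}$, the apriori estimates of Section~3 supply bounds, uniform in $\varepsilon$, on $\mathbb{E}\sup_{t}|Y_{t}^{\varepsilon}|^{2}$, on $\mathbb{E}\int_{0}^{T}|Z_{s}^{\varepsilon}|^{2}\,ds$, on $\mathbb{E}\int_{0}^{T}|\nabla\varphi_{\varepsilon}(Y_{s}^{\varepsilon})|^{2}\,ds$ (hence on the $L^{2}$--total variation of $K^{\varepsilon}$), and on $\mathbb{E}\int_{0}^{T}\varphi_{\varepsilon}(Y_{s}^{\varepsilon})\,ds$.

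Second, I would deduce Meyer--Zheng tightness of the laws of $(Y^{\varepsilon},M^{\varepsilon},K^{\varepsilon},B)$: tightness of $K^{\varepsilon}$ from its uniformly bounded total variation, of $M^{\varepsilon}$ from $L^{2}$--boundedness of the quadratic variation, of $Y^{\varepsilon}$ from the quasimartingale Meyer--Zheng criterion applied to the integral equation itself, and of $B$ in the usual sense. A Skorokhod representation in the Meyer--Zheng sense gives, along a subsequence, a new probability space carrying a tuple $(Y,M,K,B)$ with the same marginal laws and such that the convergence holds in $dt$-measure and pointwise at $dt$-almost every $t$. Passing to the limit in the integral equation at such times, and extending to every $t$ by the bounded-variation regularity of $K$, yields (\ref{eq with martingale}). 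The martingale property of the limit $M$ with respect to $\mathcal{F}_{t}^{Y,M}$ follows from the Meyer--Zheng characterisation and the uniform integrability inherited from the $L^{2}$-bound on $\langle M^{\varepsilon}\rangle_{T}$.

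The main obstacle is identifying the subdifferential inclusion $dK_{s}\in\partial\varphi(Y_{s})(ds)$ in the limit, because the product $H\nabla\varphi_{\varepsilon}$ is neither monotone nor preserved under weak convergence in an obvious way. I would handle this by working with the Moreau--Yosida resolvent $J_{\varepsilon}$ and writing the variational inequality
\[
\int_{s}^{t}\langle y(r)-J_{\varepsilon}(Y_{r}^{\varepsilon}),dK_{r}^{\varepsilon}\rangle+\int_{s}^{t}\varphi(J_{\varepsilon}(Y_{r}^{\varepsilon}))\,dr\leq\int_{s}^{t}\varphi(y(r))\,dr,
\]
valid since $\nabla\varphi_{\varepsilon}(y)\in\partial\varphi(J_{\varepsilon}y)$, and then passing to the limit using $|J_{\varepsilon}y-y|\leq\varepsilon|\nabla\varphi_{\varepsilon}(y)|$, the lower semicontinuity of $\varphi$, and the Fitzpatrick-type convex-analysis techniques of \cite{Rascanu/Rotenstein:11} that are robust under $dt$-measure convergence. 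Finally, uniqueness in law is obtained from the Markovian structure: any weak solution must satisfy $Y_{t}=u(t,X_{t})$ with $u$ the unique viscosity solution of the associated parabolic variational inequality with oblique subgradients, and this forces the joint law of $(Y,M,K)$ to be uniquely determined.
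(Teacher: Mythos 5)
Your existence argument follows essentially the same route as the paper: Moreau--Yosida penalization, the uniform $L^2$ bounds on $(Y^{\varepsilon},Z^{\varepsilon},\nabla\varphi_{\varepsilon}(Y^{\varepsilon}))$ from Section~3, Meyer--Zheng tightness of $(Y^{\varepsilon},M^{\varepsilon})$ via the conditional-variation criterion, a Skorokhod representation, and identification of the inclusion $dK_s\in\partial\varphi(Y_s)(ds)$ by pairing $\nabla\varphi_{\varepsilon}(Y^{\varepsilon})\rightharpoonup U$ (weak in $L^2$) against $J_{\varepsilon}(Y^{\varepsilon})\to Y$ (strong in $L^2$) and invoking the lower semicontinuity of $\varphi$. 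Two small imprecisions in that part: the penalized driver $F(s,y)-H(s,y)\nabla\varphi_{\varepsilon}(y)$ is \emph{not} globally Lipschitz in $y$ (the cross term $|H(s,y)-H(s,y')|\,|\nabla\varphi_{\varepsilon}(y)|\le \frac{\Lambda}{\varepsilon}|y-y'|\,|y|$ gives only a locally Lipschitz bound with linearly growing constant, which is why the paper routes the solvability through the localized Theorem~\ref{Corollary existence sol} rather than plain Pardoux--Peng); and you should state explicitly that after the Skorokhod representation the product $H(s,\bar Y^{n}_s)\nabla\varphi_{1/n}(\bar Y^{n}_s)$ converges weakly to $H(s,\bar Y_s)\bar U_s$ because $H(\cdot,\bar Y^{n})\to H(\cdot,\bar Y)$ strongly (by $dt$-a.e.\ convergence and boundedness of $H$) while $\nabla\varphi_{1/n}(\bar Y^{n})$ converges only weakly --- this is precisely where the a.s.\ Meyer--Zheng convergence is indispensable, and it is the resolution of the ``obstacle'' you name without resolving.

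The genuine gap is the uniqueness claim. You assert that any weak solution satisfies $Y_t=u(t,X_t)$ with $u$ the \emph{unique} viscosity solution of the parabolic variational inequality with oblique subgradients $\frac{\partial u}{\partial t}+\mathcal{A}_t u+F\in H(t,u)\partial\varphi(u)$, and deduce uniqueness in law from this. But uniqueness of viscosity solutions for this multidimensional system with the non-monotone product $H\partial\varphi$ is exactly the hard open point: the paper explicitly defers it (the remark following Eq.~(\ref{BSVI Markovian}) says the multidimensional case ``must follow the approach from'' \cite{Maticiuc/Pardoux/Rascanu/Zalinescu:10} and proves nothing here), and no comparison principle for such systems is established or cited in a usable form. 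Moreover, even granting PDE uniqueness, the representation $Y_t=u(t,X_t)$ for an \emph{arbitrary} weak solution (with a martingale $M$ that need not a priori be a stochastic integral against $B$) is a verification theorem that itself requires proof. So your uniqueness paragraph is circular modulo two substantial unproved ingredients; the honest conclusion available by these methods is existence of a weak solution only, which is in fact all that the paper's own proof of Theorem~\ref{Th. for weak existence} delivers.
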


The proofs of the above results are detailed in the next sections. Section 4
deals with a sequence of approximating equations and apriori estimates of
their solutions. The estimates will be valid for both cases covered by Theorem
\ref{Th. for strong existence} and Theorem \ref{Th. for weak existence}. After
this, the proof is split in Section 5 and Section 6, each one being dedicated
to the particularities brought by Theorem \ref{Th. for strong existence} and
Theorem \ref{Th. for weak existence}.

\section{Approximating problems and apriori estimates}

In order to prove the existence of the solution (strong or weak) we can
assume, without loosing the generality, that%
\[
\varphi\left(  y\right)  \geq\varphi\left(  0\right)  =0
\]
because, otherwise, we can change the functions $\varphi$, $F$ and $H$ as
follows%
\begin{align*}
\tilde{\varphi}\left(  y\right)   &  =\varphi\left(  y+u_{0}\right)
-\varphi\left(  u_{0}\right)  -\left\langle \hat{u}_{0},y\right\rangle
\geq0,\smallskip\\
\tilde{F}\left(  t,y,z\right)   &  =F\left(  t,y+u_{0},z\right)  -H\left(
t,y+u_{0}\right)  \hat{u}_{0},\smallskip\\
\tilde{H}\left(  t,y\right)   &  =H\left(  t,y+u_{0}\right)  ,
\end{align*}
with $u_{0}\in Dom\left(  \partial\varphi\right)  $ and $\hat{u}_{0}%
\in\partial\varphi\left(  u_{0}\right)  $. The solution is now given by
$\left(  Y,Z,K\right)  =(\tilde{Y}+u_{0},\tilde{Z},\tilde{K})$, where%
\[
\left\{
\begin{array}
[c]{l}%
\tilde{Y}_{t}+%
{\displaystyle\int_{t}^{T}}
\tilde{H}(s,\tilde{Y}_{s})d\tilde{K}_{s}=\left(  \eta-u_{0}\right)  +%
{\displaystyle\int_{t}^{T}}
F(s,\tilde{Y}_{s},\tilde{Z}_{s})ds-%
{\displaystyle\int_{t}^{T}}
\tilde{Z}_{s}dB_{s},\text{ }\forall t\in\left[  0,T\right]  ,\smallskip\\
d\tilde{K}_{s}\left(  \omega\right)  \in\partial\tilde{\varphi}(\tilde{Y}%
_{s}\left(  \omega\right)  )\left(  ds\right)  ,\text{ }\forall s,\text{
}\mathbb{P}-a.s.~\omega\in\Omega.
\end{array}
\right.
\]
We start simultaneously the proofs of Theorem \ref{Th. for strong existence}
and Theorem \ref{Th. for weak existence} by obtaining some apriori estimates
for the solutions of the approximating equations.\medskip

\begin{proof}
Let $p>1$.

\noindent\textbf{Step 1.} \textit{Boundedness under the assumption}%
\[
0\leq\ell\left(  t\right)  \equiv\ell<\sqrt{a}.
\]

\noindent Let $0<\varepsilon\leq1.$ Consider the approximating BSDE%
\begin{equation}
Y_{t}^{\varepsilon}+%
{\displaystyle\int_{t}^{T}}
H\left(  s,Y_{s}^{\varepsilon}\right)  \nabla\varphi_{\varepsilon}\left(
Y_{s}^{\varepsilon}\right)  ds=\eta+%
{\displaystyle\int_{t}^{T}}
F\left(  s,Y_{s}^{\varepsilon},Z_{s}^{\varepsilon}\right)  ds-%
{\displaystyle\int_{t}^{T}}
Z_{s}^{\varepsilon}dB_{s},\quad\forall t\in\left[  0,T\right]  .
\label{approximating eq for general case}%
\end{equation}
Let $\tilde{F}\left(  t,y\right)  =F\left(  t,y,z\right)  -H\left(
t,y\right)  \nabla\varphi_{\varepsilon}\left(  y\right)  $. Using the
Lipschitz and boundedness hypothesis imposed on $F$ and $H$, we have, for all
$t\in\left[  0,T\right]  $, $y,y^{\prime}\in\mathbb{R}^{d}$, $z,z^{\prime}%
\in\mathbb{R}^{d\times k}$,%
\begin{align*}
&  |\tilde{F}(t,y^{\prime},z^{\prime})-\tilde{F}(t,y,z)|\\
&  \leq|F(t,y^{\prime},z^{\prime})-F(t,y,z)|+|\left[  H\left(  t,y\right)
-H\left(  t,y^{\prime}\right)  \right]  \nabla\varphi_{\varepsilon}\left(
y\right)  |+|H\left(  t,y^{\prime}\right)  \left[  \nabla\varphi_{\varepsilon
}\left(  y\right)  -\nabla\varphi_{\varepsilon}\left(  y^{\prime}\right)
\right]  |\\
&  \leq L\left(  t\right)  |y^{\prime}-y|+\ell|z^{\prime}-z|+\dfrac{\Lambda
}{\varepsilon}|y^{\prime}-y|\left\vert y\right\vert +\dfrac{b}{\varepsilon
}|y^{\prime}-y|\\
&  \leq\left(  L\left(  t\right)  +\frac{\Lambda}{\varepsilon}+\frac
{b}{\varepsilon}\right)  (1+\left\vert y\right\vert \vee|y^{\prime
}|)|y^{\prime}-y|+\ell|z^{\prime}-z|
\end{align*}
and%
\[
|\tilde{F}(t,y,0)|\leq\rho\left(  t\right)  +L\left(  t\right)  \left\vert
y\right\vert +\dfrac{b}{\varepsilon}\left\vert y\right\vert .
\]
By Theorem \ref{Corollary existence sol} (see Annex 6.1), the BSDE
(\ref{approximating eq for general case}) has a unique solution
$(Y^{\varepsilon},Z^{\varepsilon})\in S_{d}^{0}\left[  0,T\right]
\times\Lambda_{d\times k}^{0}\left(  0,T\right)  $ such that for all
$\delta>0$,%
\begin{equation}
\mathbb{E}\sup\limits_{s\in\left[  0,T\right]  }e^{\delta p\nu_{s}}\left\vert
Y_{s}^{\varepsilon}\right\vert ^{p}+\mathbb{E}\left(
{\displaystyle\int_{0}^{T}}
e^{2\delta\nu_{s}}\left\vert Z_{s}^{\varepsilon}\right\vert ^{2}ds\right)
^{p/2}<\infty. \label{space of sol for approximating equation}%
\end{equation}
By Energy Equality we obtain%
\begin{align*}
|Y_{t}^{\varepsilon}|^{2}+2%
{\displaystyle\int_{t}^{s}}
\left\langle Y_{r}^{\varepsilon},H\left(  r,Y_{r}^{\varepsilon}\right)
\nabla\varphi_{\varepsilon}\left(  Y_{r}^{\varepsilon}\right)  \right\rangle
dr+%
{\displaystyle\int_{t}^{s}}
|Z_{r}^{\varepsilon}|^{2}dr  &  =|Y_{s}^{\varepsilon}|^{2}+2%
{\displaystyle\int_{t}^{s}}
\left\langle Y_{r}^{\varepsilon},F\left(  r,Y_{r}^{\varepsilon},Z_{r}%
^{\varepsilon}\right)  \right\rangle dr\\
&  -2%
{\displaystyle\int_{t}^{s}}
\left\langle Y_{r}^{\varepsilon},Z_{r}^{\varepsilon}dB_{r}\right\rangle .
\end{align*}
Since $y\longmapsto\varphi_{\varepsilon}\left(  y\right)  :\mathbb{R}%
^{d}\rightarrow\mathbb{R}$ is a convex $C^{1}-$function, then by the
subdifferential inequality (\ref{subdiff ineq 1}) (see Annex 6.3)%
\begin{align*}
&  \varphi_{\varepsilon}\left(  Y_{t}^{\varepsilon}\right)  +%
{\displaystyle\int_{t}^{s}}
\left\langle \nabla\varphi_{\varepsilon}\left(  Y_{r}^{\varepsilon}\right)
,H(r,Y_{r}^{\varepsilon})\nabla\varphi_{\varepsilon}\left(  Y_{r}%
^{\varepsilon}\right)  \right\rangle dr\\
&  \leq\varphi_{\varepsilon}\left(  Y_{s}^{\varepsilon}\right)  +%
{\displaystyle\int_{t}^{s}}
\left\langle \nabla\varphi_{\varepsilon}(Y_{r}^{\varepsilon}),F(r,Y_{r}%
^{\varepsilon},Z_{r}^{\varepsilon})\right\rangle dr-%
{\displaystyle\int_{t}^{s}}
\left\langle \nabla\varphi_{\varepsilon}(Y_{r}^{\varepsilon}),Z_{r}%
^{\varepsilon}dB_{r}\right\rangle .
\end{align*}
As consequence, combining the previous two inequalities, we obtain%
\begin{align}
&  |Y_{t}^{\varepsilon}|^{2}+\varphi_{\varepsilon}(Y_{t}^{\varepsilon})+%
{\displaystyle\int_{t}^{s}}
\left\langle \nabla\varphi_{\varepsilon}(Y_{r}^{\varepsilon}),H(r,Y_{r}%
^{\varepsilon})\nabla\varphi_{\varepsilon}(Y_{r}^{\varepsilon})\right\rangle
dr+%
{\displaystyle\int_{t}^{s}}
|Z_{r}^{\varepsilon}|^{2}dr\label{ineq2}\\
&  \leq|Y_{s}^{\varepsilon}|^{2}+\varphi_{\varepsilon}(Y_{s}^{\varepsilon})+2%
{\displaystyle\int_{t}^{s}}
\left\langle Y_{r}^{\varepsilon},F(r,Y_{r}^{\varepsilon},Z_{r}^{\varepsilon
})\right\rangle dr\nonumber\\
&  +%
{\displaystyle\int_{t}^{s}}
\left\langle \nabla\varphi_{\varepsilon}(Y_{r}^{\varepsilon}),F(r,Y_{r}%
^{\varepsilon},Z_{r}^{\varepsilon})-2H(r,Y_{r}^{\varepsilon})^{\ast}%
Y_{r}^{\varepsilon}\right\rangle dr-%
{\displaystyle\int_{t}^{s}}
\left\langle 2Y_{r}^{\varepsilon}+\nabla\varphi_{\varepsilon}(Y_{r}%
^{\varepsilon}),Z_{r}^{\varepsilon}dB_{r}\right\rangle .\nonumber
\end{align}
Let $\lambda>0$. In the sequel we denote by $C$ a generic positive constant,
independent of $\varepsilon,\delta\in(0,1]$, constant which can change from
one line to another, without affecting the result. The assumptions $(H_{2})$
and (\ref{hypothesis on F}) lead to the following estimates:

\begin{itemize}
\item $\quad\left\langle \nabla\varphi_{\varepsilon}\left(  Y_{r}%
^{\varepsilon}\right)  ,H\left(  r,Y_{r}^{\varepsilon}\right)  \nabla
\varphi_{\varepsilon}\left(  Y_{r}^{\varepsilon}\right)  \right\rangle \geq
a\left\vert \nabla\varphi_{\varepsilon}\left(  Y_{r}^{\varepsilon}\right)
\right\vert ^{2}$\medskip

\item $\quad2\left\langle Y_{r}^{\varepsilon},F\left(  r,Y_{r}^{\varepsilon
},Z_{r}^{\varepsilon}\right)  \right\rangle \leq2\ell\left\vert Y_{r}%
^{\varepsilon}\right\vert \left\vert Z_{r}^{\varepsilon}\right\vert +2L\left(
r\right)  \left\vert Y_{r}^{\varepsilon}\right\vert ^{2}+2\left\vert
Y_{r}^{\varepsilon}\right\vert \left\vert F\left(  r,0,0\right)  \right\vert
$\medskip

$\quad\leq\lambda\left\vert Z_{r}^{\varepsilon}\right\vert ^{2}+\left(
2L\left(  r\right)  +\dfrac{\ell^{2}}{\lambda}+1\right)  \left\vert
Y_{r}^{\varepsilon}\right\vert ^{2}+\rho^{2}\left(  r\right)  $\medskip

\item $\quad\left\langle \nabla\varphi_{\varepsilon}\left(  Y_{r}%
^{\varepsilon}\right)  ,F\left(  r,Y_{r}^{\varepsilon},Z_{r}^{\varepsilon
}\right)  -2H\left(  r,Y_{r}^{\varepsilon}\right)  ^{\ast}Y_{r}^{\varepsilon
}\right\rangle $\medskip

$\quad\leq\left\vert \nabla\varphi_{\varepsilon}\left(  Y_{r}^{\varepsilon
}\right)  \right\vert \left[  \ell\left\vert Z_{r}^{\varepsilon}\right\vert
+L\left(  r\right)  \left\vert Y_{r}^{\varepsilon}\right\vert +\left\vert
F\left(  r,0,0\right)  \right\vert +2b\left\vert Y_{r}^{\varepsilon
}\right\vert \right]  $\medskip

$\quad\leq\lambda\left\vert Z_{r}^{\varepsilon}\right\vert ^{2}+\dfrac
{1}{4\lambda}\ell^{2}\left\vert \nabla\varphi_{\varepsilon}\left(
Y_{r}^{\varepsilon}\right)  \right\vert ^{2}+\dfrac{a}{4\lambda}\left\vert
\nabla\varphi_{\varepsilon}\left(  Y_{r}^{\varepsilon}\right)  \right\vert
^{2}+\dfrac{2\lambda}{a}\left[  \left(  L\left(  r\right)  +2b\right)
^{2}\left\vert Y_{r}^{\varepsilon}\right\vert ^{2}+\left\vert F\left(
r,0,0\right)  \right\vert ^{2}\right]  $\medskip
\end{itemize}

\noindent Inserting the above estimates in (\ref{ineq2}), we obtain,
$\mathbb{P}-a.s.,$ for all $0\leq t\leq s\leq T$,%
\[%
\begin{array}
[c]{l}%
\left\vert Y_{t}^{\varepsilon}\right\vert ^{2}+\varphi_{\varepsilon}\left(
Y_{t}^{\varepsilon}\right)  +\left(  a-\dfrac{a+\ell^{2}}{4\lambda}\right)
{\displaystyle\int_{t}^{s}}
\left\vert \nabla\varphi_{\varepsilon}\left(  Y_{r}^{\varepsilon}\right)
\right\vert ^{2}dr+\left(  1-2\lambda\right)
{\displaystyle\int_{t}^{s}}
\left\vert Z_{r}^{\varepsilon}\right\vert ^{2}dr\medskip\\
\quad\quad\quad\leq\left\vert Y_{s}^{\varepsilon}\right\vert ^{2}%
+\varphi_{\varepsilon}\left(  Y_{s}^{\varepsilon}\right)  +%
{\displaystyle\int_{t}^{s}}
\left(  1+\dfrac{2\lambda}{a}\right)  \left\vert F\left(  r,0,0\right)
\right\vert ^{2}dr\medskip\\
\quad\quad\quad+%
{\displaystyle\int_{t}^{s}}
\left(  2L\left(  r\right)  +\dfrac{\ell^{2}}{\lambda}+1+\dfrac{2\lambda}%
{a}\left(  L\left(  r\right)  +2b\right)  ^{2}\right)  \left\vert
Y_{r}^{\varepsilon}\right\vert ^{2}dr-%
{\displaystyle\int_{t}^{s}}
\left\langle 2Y_{r}^{\varepsilon}+\nabla\varphi_{\varepsilon}\left(
Y_{r}^{\varepsilon}\right)  ,Z_{r}^{\varepsilon}dB_{r}\right\rangle .
\end{array}
\]
Denote%
\[
K_{t}^{\lambda}=%
{\displaystyle\int_{0}^{t}}
\left[  \left(  1+\dfrac{2\lambda}{a}\right)  \left\vert F\left(
r,0,0\right)  \right\vert ^{2}-\left(  a-\dfrac{a+\ell^{2}}{4\lambda}\right)
\left\vert \nabla\varphi_{\varepsilon}\left(  Y_{r}^{\varepsilon}\right)
\right\vert ^{2}-\left(  1-2\lambda\right)  \left\vert Z_{r}^{\varepsilon
}\right\vert ^{2}\right]  dr
\]
and%
\[
A\left(  t\right)  =%
{\displaystyle\int_{0}^{t}}
\left(  2L\left(  r\right)  +\dfrac{\ell^{2}}{\lambda}+1+\dfrac{2\lambda}%
{a}\left(  L\left(  r\right)  +2b\right)  ^{2}\right)  dr.
\]
Since $\varphi_{\varepsilon}\left(  y\right)  \geq\varphi_{\varepsilon}\left(
0\right)  =0$ we have%
\[%
\begin{array}
[c]{c}%
\left\vert Y_{t}^{\varepsilon}\right\vert ^{2}+\varphi_{\varepsilon}\left(
Y_{t}^{\varepsilon}\right)  \leq\left\vert Y_{s}^{\varepsilon}\right\vert
^{2}+\varphi_{\varepsilon}\left(  Y_{s}^{\varepsilon}\right)  +%
{\displaystyle\int_{t}^{s}}
\left[  dK_{r}^{\lambda}+\left[  \left\vert Y_{r}^{\varepsilon}\right\vert
^{2}+\varphi_{\varepsilon}\left(  Y_{r}^{\varepsilon}\right)  \right]
dA\left(  r\right)  \right]  \medskip\\
-%
{\displaystyle\int_{t}^{s}}
\left\langle 2Y_{r}^{\varepsilon}+\nabla\varphi_{\varepsilon}\left(
Y_{r}^{\varepsilon}\right)  ,Z_{r}^{\varepsilon}dB_{r}\right\rangle
\end{array}
\]
and, by Proposition \ref{exp prop ineq} (see Annex 6.3), we infer%
\begin{equation}%
\begin{array}
[c]{r}%
e^{A\left(  t\right)  }\left(  \left\vert Y_{t}^{\varepsilon}\right\vert
^{2}+\varphi_{\varepsilon}\left(  Y_{t}^{\varepsilon}\right)  \right)  \leq
e^{A\left(  s\right)  }\left[  \left\vert Y_{s}^{\varepsilon}\right\vert
^{2}+\varphi_{\varepsilon}\left(  Y_{s}^{\varepsilon}\right)  \right]  +%
{\displaystyle\int_{t}^{s}}
e^{A\left(  r\right)  }dK_{r}^{\lambda}\medskip\\
-%
{\displaystyle\int_{t}^{s}}
e^{A\left(  r\right)  }\left\langle 2Y_{r}^{\varepsilon}+\nabla\varphi
_{\varepsilon}\left(  Y_{r}^{\varepsilon}\right)  ,Z_{r}^{\varepsilon}%
dB_{r}\right\rangle .
\end{array}
\label{ineq3}%
\end{equation}
Let $\lambda=\dfrac{1}{2}\left(  \dfrac{a+\ell^{2}}{4a}+\dfrac{1}{2}\right)  $
be fixed. It follows that, for all $0\leq t\leq s\leq T$,%
\begin{equation}%
\begin{array}
[c]{l}%
\left\vert Y_{t}^{\varepsilon}\right\vert ^{2}+\varphi_{\varepsilon}\left(
Y_{t}^{\varepsilon}\right)  +\mathbb{E}^{\mathcal{F}_{t}}%
{\displaystyle\int_{t}^{s}}
\left\vert \nabla\varphi_{\varepsilon}\left(  Y_{r}^{\varepsilon}\right)
\right\vert ^{2}dr+\mathbb{E}^{\mathcal{F}_{t}}%
{\displaystyle\int_{t}^{s}}
\left\vert Z_{r}^{\varepsilon}\right\vert ^{2}dr\medskip\\
\quad\quad\quad\leq C\mathbb{E}^{\mathcal{F}_{t}}\left\vert Y_{s}%
^{\varepsilon}\right\vert ^{2}+\mathbb{E}^{\mathcal{F}_{t}}\varphi
_{\varepsilon}\left(  Y_{s}^{\varepsilon}\right)  +C\mathbb{E}^{\mathcal{F}%
_{t}}%
{\displaystyle\int_{t}^{s}}
\left\vert F\left(  r,0,0\right)  \right\vert ^{2}dr.
\end{array}
\label{ineq4}%
\end{equation}
In particular, we consider $s=T$ and, since $0\leq\varphi_{\varepsilon}\left(
\eta\right)  \leq\varphi\left(  \eta\right)  $,%
\begin{equation}
\mathbb{E}%
{\displaystyle\int_{0}^{T}}
\left\vert \nabla\varphi_{\varepsilon}\left(  Y_{r}^{\varepsilon}\right)
\right\vert ^{2}dr+\mathbb{E}%
{\displaystyle\int_{0}^{T}}
\left\vert Z_{r}^{\varepsilon}\right\vert ^{2}dr\leq C\left[  \mathbb{E}%
\left\vert \eta\right\vert ^{2}+\mathbb{E}\varphi\left(  \eta\right)
+\mathbb{E}%
{\displaystyle\int_{0}^{T}}
\left\vert F\left(  r,0,0\right)  \right\vert ^{2}dr\right]  =\tilde{C}.
\label{ineq5}%
\end{equation}
Using the definition of $\nabla\varphi_{\varepsilon}$ we also obtain%
\begin{equation}
\mathbb{E}%
{\displaystyle\int_{0}^{T}}
\left\vert Y_{r}^{\varepsilon}-J_{\varepsilon}(Y_{r}^{\varepsilon})\right\vert
^{2}dr\leq\tilde{C}\varepsilon^{2}. \label{ineq6}%
\end{equation}

\noindent We write the approximating BSDE
(\ref{approximating eq for general case}) under the form%
\[
Y_{t}^{\varepsilon}=\eta+%
{\displaystyle\int_{t}^{T}}
dK_{s}^{\varepsilon}-%
{\displaystyle\int_{t}^{T}}
Z_{s}^{\varepsilon}dB_{s},
\]
where%
\[
dK_{s}^{\varepsilon}=\left[  F\left(  s,Y_{s}^{\varepsilon},Z_{s}%
^{\varepsilon}\right)  -H\left(  s,Y_{s}^{\varepsilon}\right)  \nabla
\varphi_{\varepsilon}\left(  Y_{s}^{\varepsilon}\right)  \right]  ds.
\]

\noindent If we denote%
\[
N_{t}=%
{\displaystyle\int_{0}^{t}}
\left[  \left\vert F\left(  s,0,0\right)  \right\vert +b\left\vert
\nabla\varphi_{\varepsilon}\left(  Y_{s}^{\varepsilon}\right)  \right\vert
\right]  ds\quad\text{and}\quad V\left(  t\right)  =%
{\displaystyle\int_{0}^{t}}
\left(  L\left(  s\right)  +\ell^{2}\right)  ds,
\]
then%
\[
\left\langle Y_{t}^{\varepsilon},dK_{t}^{\varepsilon}\right\rangle
\leq\left\vert Y_{t}^{\varepsilon}\right\vert dN_{t}+\left[  \left\vert
F\left(  t,0,0\right)  \right\vert +b\left\vert \nabla\varphi_{\varepsilon
}\left(  Y_{t}^{\varepsilon}\right)  \right\vert \right]  dt+\left\vert
Y_{t}^{\varepsilon}\right\vert ^{2}dV\left(  t\right)  +\dfrac{1}{4}\left\vert
Z_{t}^{\varepsilon}\right\vert ^{2}dt.
\]
We apply Proposition \ref{ineq cond exp} (see Annex 6.3) and it infers, for
$p=2$,%

\[%
\begin{array}
[c]{l}%
\mathbb{E}^{\mathcal{F}_{t}}\sup\limits_{s\in\left[  t,T\right]  }\left\vert
e^{V\left(  s\right)  }Y_{s}^{\varepsilon}\right\vert ^{2}+\mathbb{E}%
^{\mathcal{F}_{t}}\left(
{\displaystyle\int_{t}^{T}}
e^{2V\left(  s\right)  }\left\vert Z_{s}^{\varepsilon}\right\vert
^{2}ds\right)  \medskip\\
\quad\quad\quad\leq C\mathbb{E}^{\mathcal{F}_{t}}\left[  \left\vert
e^{V\left(  T\right)  }\eta\right\vert ^{2}+\left(
{\displaystyle\int_{t}^{T}}
e^{V\left(  s\right)  }\left[  \left\vert F\left(  s,0,0\right)  \right\vert
+b\left\vert \nabla\varphi_{\varepsilon}\left(  Y_{s}^{\varepsilon}\right)
\right\vert \right]  ds\right)  ^{2}\right]  .
\end{array}
\]
Taking into account (\ref{ineq5}) it follows%
\begin{equation}
\left\vert Y_{0}^{\varepsilon}\right\vert ^{2}\leq\mathbb{E}\sup
\limits_{s\in\left[  0,T\right]  }\left\vert Y_{s}^{\varepsilon}\right\vert
^{2}\leq C\left[  \mathbb{E}\left\vert \eta\right\vert ^{2}+\mathbb{E}%
\varphi\left(  \eta\right)  +\mathbb{E}%
{\displaystyle\int_{0}^{T}}
\left\vert F\left(  r,0,0\right)  \right\vert ^{2}dr\right]  . \label{ineq8}%
\end{equation}
\noindent The Lipschitz and the boundedness hypotheses $\left(  H_{4}\right)
$ imposed on $F$ lead, due to the fact that $l$ is constant, $L\in
L^{2}\left(  0,T;\mathbb{R}_{+}\right)  $ and $\rho\in L^{1}\left(
0,T;\mathbb{R}_{+}\right)  $, to%
\[%
\begin{array}
[c]{l}%
\mathbb{E}%
{\displaystyle\int_{0}^{T}}
|F(r,Y_{r}^{\varepsilon},Z_{r}^{\varepsilon})|^{2}dr\leq2\mathbb{E}%
{\displaystyle\int_{0}^{T}}
|F(r,Y_{r}^{\varepsilon},Z_{r}^{\varepsilon})-F(r,Y_{r}^{\varepsilon}%
,0)|^{2}dr+2\mathbb{E}%
{\displaystyle\int_{0}^{T}}
|F(r,Y_{r}^{\varepsilon},0)|^{2}dr\medskip\\
\quad\quad\quad\quad\quad\quad\quad\quad\quad\leq2l^{2}\mathbb{E}%
{\displaystyle\int_{0}^{T}}
|Z_{r}^{\varepsilon}|^{2}dr+4\mathbb{E}%
{\displaystyle\int_{0}^{T}}
L^{2}(r)|Y_{r}^{\varepsilon}|^{2}dr+4\mathbb{E}%
{\displaystyle\int_{0}^{T}}
|F(r,0,0)|^{2}dr\leq C,
\end{array}
\]%
\begin{equation}
\mathbb{E}%
{\displaystyle\int_{0}^{T}}
|F(r,Y_{r}^{\varepsilon},Z_{r}^{\varepsilon})|dr\leq\mathbb{E}%
{\displaystyle\int_{0}^{T}}
\left[  L\left(  r\right)  |Y_{r}^{\varepsilon}|+l|Z_{r}^{\varepsilon
}|+|F(r,0,0)|\right]  dr\leq C \label{ineq7}%
\end{equation}
For the convenience of the reader, we will group together, under the form of a
Lemma, some useful estimations on the solution of the approximating equation,
estimation that we just obtained in Step 1.

\begin{lemma}
\label{Lemma with the estimations from Step 1}Consider the approximating BSDE
(\ref{approximating eq for general case}), with its solution $\left(
Y^{\varepsilon},Z^{\varepsilon}\right)  $ and denote $U^{\varepsilon}%
=\nabla\varphi_{\varepsilon}(Y^{\varepsilon})$. There exists a positive
constant $C=C(a,b,\Lambda,l,L(\cdot))$, independent of $\varepsilon$, such
that%
\begin{equation}
\mathbb{E}\sup\limits_{s\in\left[  0,T\right]  }\left\vert Y_{s}^{\varepsilon
}\right\vert ^{2}+\mathbb{E}%
{\displaystyle\int_{0}^{T}}
(\left\vert U_{r}^{\varepsilon}\right\vert ^{2}+\left\vert Z_{r}^{\varepsilon
}\right\vert ^{2})dr\leq C\left[  \mathbb{E}\left\vert \eta\right\vert
^{2}+\mathbb{E}\varphi\left(  \eta\right)  +\mathbb{E}%
{\displaystyle\int_{0}^{T}}
\left\vert F\left(  r,0,0\right)  \right\vert ^{2}dr\right]  .
\label{ineq Y,Z,U}%
\end{equation}
$\smallskip$
\end{lemma}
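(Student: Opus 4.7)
The plan is to collect and package the two estimates already produced during Step~1: the $L^{2}$--bound on the subgradient process and on $Z^{\varepsilon}$ given by (\ref{ineq5}), and the uniform sup--bound on $Y^{\varepsilon}$ given by (\ref{ineq8}). The strategy is to feed the energy equality for $|Y^{\varepsilon}|^{2}$ and the subdifferential inequality for the Moreau--Yosida approximation $\varphi_{\varepsilon}$ into a single ``$|Y|^{2}+\varphi_{\varepsilon}(Y)$''--type inequality, kill the quadratic forms in $\nabla\varphi_{\varepsilon}(Y^{\varepsilon})$ and $Z^{\varepsilon}$ appearing on the left, and then close the loop by a backward Gronwall argument applied to the BSDE rewritten in standard form.

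First, I would apply It\^{o}'s formula to $|Y^{\varepsilon}_{t}|^{2}$ and combine it with the subgradient inequality $\varphi_{\varepsilon}(Y^{\varepsilon}_{t})\leq \varphi_{\varepsilon}(Y^{\varepsilon}_{s})+\int_{t}^{s}\langle \nabla\varphi_{\varepsilon}(Y^{\varepsilon}_{r}),dY^{\varepsilon}_{r}\rangle-\tfrac12\int_{t}^{s}d\langle\nabla\varphi_{\varepsilon}(Y^{\varepsilon})\rangle_{r}$, which in the Markovian BSDE context reduces to inequality~(\ref{ineq2}). On the left-hand side the crucial term is $\langle \nabla\varphi_{\varepsilon}(Y^{\varepsilon}_{r}),H(r,Y^{\varepsilon}_{r})\nabla\varphi_{\varepsilon}(Y^{\varepsilon}_{r})\rangle$, which by the coercivity $(H_{2})$--$(ii)$ is bounded below by $a|\nabla\varphi_{\varepsilon}(Y^{\varepsilon}_{r})|^{2}$. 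The cross terms $\langle Y^{\varepsilon},F\rangle$ and $\langle \nabla\varphi_{\varepsilon}(Y^{\varepsilon}),F-2H^{\ast}Y^{\varepsilon}\rangle$ I would split by Young's inequality, using the Lipschitz/boundedness hypothesis~(\ref{hypothesis on F}) and the uniform bound $|H|\leq b$ from $(H_{2})$--$(iv)$, so that a fraction $\lambda|Z^{\varepsilon}|^{2}$ and a fraction $\frac{a+\ell^{2}}{4\lambda}|\nabla\varphi_{\varepsilon}(Y^{\varepsilon})|^{2}$ appear on the right. Choosing $\lambda=\tfrac12\bigl(\tfrac{a+\ell^{2}}{4a}+\tfrac12\bigr)$, which under the standing assumption $\ell<\sqrt{a}$ is strictly less than $1/2$, guarantees that the coefficients in front of both $|\nabla\varphi_{\varepsilon}(Y^{\varepsilon})|^{2}$ and $|Z^{\varepsilon}|^{2}$ on the left-hand side remain strictly positive.

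Next, the drift term $\int_{t}^{s}\bigl(2L(r)+\frac{\ell^{2}}{\lambda}+1+\frac{2\lambda}{a}(L(r)+2b)^{2}\bigr)(|Y^{\varepsilon}_{r}|^{2}+\varphi_{\varepsilon}(Y^{\varepsilon}_{r}))dr$ I would absorb by multiplying through by the exponential weight $e^{A(t)}$ and invoking the exponential inequality of Proposition~\ref{exp prop ineq} to obtain~(\ref{ineq4}). Taking $t=0$, $s=T$, dropping the non-negative $\varphi_{\varepsilon}(Y^{\varepsilon}_{0})$ term, and using the bound $0\leq \varphi_{\varepsilon}(\eta)\leq \varphi(\eta)$ yields (\ref{ineq5}), i.e. the $L^{2}$--estimate for $U^{\varepsilon}=\nabla\varphi_{\varepsilon}(Y^{\varepsilon})$ and $Z^{\varepsilon}$. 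To conclude, I would rewrite the approximating BSDE as $Y^{\varepsilon}_{t}=\eta+\int_{t}^{T}dK^{\varepsilon}_{s}-\int_{t}^{T}Z^{\varepsilon}_{s}dB_{s}$ with $dK^{\varepsilon}_{s}=[F(s,Y^{\varepsilon}_{s},Z^{\varepsilon}_{s})-H(s,Y^{\varepsilon}_{s})U^{\varepsilon}_{s}]ds$, estimate $\langle Y^{\varepsilon}_{t},dK^{\varepsilon}_{t}\rangle$ using $|H|\leq b$ and the Lipschitz decomposition of $F$, and apply the conditional backward estimate of Proposition~\ref{ineq cond exp} with $p=2$ against the weight $V(t)=\int_{0}^{t}(L(s)+\ell^{2})ds$. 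The driving term $\int_{0}^{T}e^{V(s)}[|F(s,0,0)|+b|U^{\varepsilon}_{s}|]ds$ is square-integrable by~(\ref{ineq5}) and the hypothesis on $\rho$ and $L$, so this delivers~(\ref{ineq8}) with a constant depending only on $a,b,\Lambda,\ell,L(\cdot)$.

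The main obstacle, which is precisely what forces the exponential-weight Gronwall dressing rather than a direct application of standard BSDE estimates, is that the product $H\nabla\varphi_{\varepsilon}$ inherits neither the monotonicity of $\partial\varphi$ nor a uniform Lipschitz constant, so the usual trick of absorbing the subgradient term into the monotone side of an It\^{o} expansion fails. The workaround is to handle $|Y|^{2}$ and $\varphi_{\varepsilon}(Y)$ simultaneously so that the symmetry of $H$ together with coercivity $\langle u,H u\rangle\geq a|u|^{2}$ produces the clean quadratic lower bound $a|U^{\varepsilon}|^{2}$; the non-monotone ``bad'' cross term $-2H^{\ast}Y^{\varepsilon}$ is then merely a Lipschitz perturbation that is controlled by Young's inequality at the price of a smaller but still positive coefficient in front of $|U^{\varepsilon}|^{2}$, which is exactly why the constant $\lambda$ must be chosen in the narrow window dictated by $\ell<\sqrt{a}$.
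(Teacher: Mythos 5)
Your proposal is correct and follows essentially the same route as the paper: combine the energy equality for $|Y^{\varepsilon}|^{2}$ with the stochastic subdifferential inequality for $\varphi_{\varepsilon}$ to reach (\ref{ineq2}), use coercivity of $H$ and Young's inequality with the same choice of $\lambda$ in the window forced by $\ell<\sqrt{a}$, pass through the exponential weight of Proposition \ref{exp prop ineq} to get (\ref{ineq4}) and hence (\ref{ineq5}), and then close with Proposition \ref{ineq cond exp} at $p=2$ to obtain (\ref{ineq8}). The only blemish is your informal writing of the subdifferential inequality (the bracket term $d\langle\nabla\varphi_{\varepsilon}(Y^{\varepsilon})\rangle$ is not the right object, since $\varphi_{\varepsilon}$ is only $C^{1}$ one must invoke (\ref{subdiff ineq 1}) directly), but you correctly identify that it reduces to (\ref{ineq2}), so nothing substantive is missing.
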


\noindent\textbf{Step 2.} \textit{Convergences under the assumption}%
\[
0\leq\ell\left(  t\right)  \equiv\ell<\sqrt{a}.
\]
The estimations of Step 1 imply that there exist a sequence $\left\{
\varepsilon_{n}:n\in\mathbb{N}^{\ast}\right\}  ,$ $\varepsilon_{n}%
\rightarrow0$ as $n\rightarrow\infty$, and six progressively measurable
stochastic processes $Y,Z,U,F,\chi,h$ such that%
\begin{align*}
Y_{0}^{\varepsilon_{n}}  &  \rightarrow Y_{0},\quad\text{in }\mathbb{R}^{d},\\
Z^{\varepsilon_{n}}  &  \rightharpoonup Z,\quad\text{weakly in }L^{2}%
(\Omega\times\left(  0,T\right)  ;\mathbb{R}^{d\times k}),
\end{align*}
and, weakly in $L^{2}(\Omega\times\left(  0,T\right)  ;\mathbb{R}^{d})$,%
\[%
\begin{array}
[c]{c}%
Y^{\varepsilon_{n}}\rightharpoonup Y,\quad\quad\nabla\varphi_{\varepsilon_{n}%
}\left(  Y^{\varepsilon_{n}}\right)  \rightharpoonup U,\quad\quad H\left(
\cdot,Y^{\varepsilon_{n}}\right)  \rightharpoonup h,\medskip\\
H\left(  \cdot,Y^{\varepsilon_{n}}\right)  \nabla\varphi_{\varepsilon_{n}%
}\left(  Y^{\varepsilon_{n}}\right)  \rightharpoonup\chi\quad\quad
\text{and}\quad\quad F\left(  \cdot,Y^{\varepsilon_{n}},Z^{\varepsilon_{n}%
}\right)  \rightharpoonup F.\medskip
\end{array}
\]
The convergence $Y^{\varepsilon_{n}}\rightharpoonup Y$ and the inequality
(\ref{ineq6}), written for $\varepsilon=\varepsilon_{n}$, imply that, on the
sequence $\left\{  \varepsilon_{n}:n\in\mathbb{N}^{\ast}\right\}  $,%
\[
J_{\varepsilon_{n}}(Y^{\varepsilon_{n}})\rightharpoonup Y,\quad\text{weakly in
}L^{2}(\Omega\times\left(  0,T\right)  ;\mathbb{R}^{d})\text{.}%
\]
We write (\ref{space of sol for approximating equation}) for $\varepsilon
=\varepsilon_{n}$ and, passing to $\liminf_{n\rightarrow+\infty}$, we obtain%
\[
\mathbb{E}\sup\limits_{s\in\left[  0,T\right]  }e^{\delta p\nu_{s}}\left\vert
Y_{s}\right\vert ^{p}+\mathbb{E}\left(
{\displaystyle\int_{0}^{T}}
e^{2\delta\nu_{s}}\left\vert Z_{s}\right\vert ^{2}ds\right)  ^{p/2}<\infty.
\]
From the approximating BSDE (\ref{approximating eq for general case}) we have
that, at the limit,%
\[
Y_{t}+\int_{t}^{T}\chi_{s}ds=\eta+\int_{t}^{T}F_{s}ds-\int_{t}^{T}Z_{s}%
dB_{s}.
\]
The continuity of the three integrals from the above equation imply also the
continuity of the process $Y$, but the previous convergences are not yet
sufficient to conclude that $\left(  Y,Z\right)  $ is a solution of the
considered equation. The remaining problems consist in proving that, for every
$s\in\left[  0,T\right]  $, $\mathbb{P}-a.s.~\omega\in\Omega$,%
\[
\chi_{s}=h_{s}U_{s},\quad h_{s}=H(s,Y_{s}),\quad Us\in\partial\varphi
(Y_{s})\quad\text{and}\quad F_{s}=F(s,Y_{s},Z_{s}).
\]
$\smallskip$

\noindent\textbf{Step 3.} \textit{Boundedness under the assumptions}%
\[
0\leq\ell\left(  t\right)  \equiv\ell<\sqrt{a}\quad\text{\textit{and}}%
\quad\left\vert \eta\right\vert ^{2}+\left\vert \varphi\left(  \eta\right)
\right\vert \leq c,\quad\mathbb{P}-a.s.~\omega\in\Omega.
\]

\noindent From inequality (\ref{ineq4}), written for $s=T$ it follows,
$\mathbb{P}-a.s.$,%
\[
\left\vert Y_{t}^{\varepsilon}\right\vert ^{2}+\varphi_{\varepsilon}\left(
Y_{t}^{\varepsilon}\right)  \leq C\left(  c+%
{\displaystyle\int_{0}^{T}}
\rho\left(  r\right)  dr\right)  =C^{\prime},\quad\text{for all }t\in\left[
0,T\right]  .
\]

\hfill
\end{proof}

Starting with this point, the proofs of Theorem \ref{Th. for strong existence}
and Theorem \ref{Th. for weak existence} will take two separate paths.

\section{Strong existence and uniqueness for $H\left(  t,y\right)  \equiv
H_{t}$}

We will continue in this section the proof of Theorem
\ref{Th. for strong existence}.\medskip

\begin{proof}
We continue the proof of the existence of a solution. Under the assumptions of
Step 3 (Section 3) we prove that $\left\{  Y^{\varepsilon}:0<\varepsilon
\leq1\right\}  $ is a Cauchy sequence. To simplify the presentation of this
task we assume $k=1$.

The form of the matrix $H$ leads to%
\[
H\nabla\varphi_{\varepsilon_{n}}\left(  Y^{\varepsilon_{n}}\right)
\rightharpoonup HU,\quad\text{weakly in }L^{2}(\Omega\times\left(  0,T\right)
;\mathbb{R}^{d}),
\]
that is%
\[
\lim_{n\rightarrow\infty}\mathbb{E}\int_{0}^{T}H_{r}\nabla\varphi
_{\varepsilon_{n}}\left(  Y_{r}^{\varepsilon_{n}}\right)  dr=\mathbb{E}%
\int_{0}^{T}H_{r}U_{r}dr.
\]
Starting from here, by the symmetric and strictly positive matrix $H_{s}%
{}^{-1}$ we will understand the inverse of the matrix $H_{s}$ and not the
inverse of the function $H$.

We have%
\[
H_{t}{}^{-1/2}=H_{T}{}^{-1/2}+%
{\displaystyle\int_{t}^{T}}
D_{s}ds\quad\text{and}\quad H_{t}^{-1}=H_{T}^{-1}+%
{\displaystyle\int_{t}^{T}}
\tilde{D}_{s}ds,
\]
where $D_{s}=-\dfrac{1}{2}H_{s}^{-3/2}\dfrac{d}{ds}H_{s}$ and $\tilde{D}%
_{s}=-\dfrac{d}{ds}H_{s}^{-1}$ are $\mathbb{R}^{d\times d}-$valued
progressively measurable stochastic processes such that, $\mathbb{P-}a.s.$,
$|D_{s}|\leq C=\frac{1}{2}b^{3/2}\Lambda$ and $|\tilde{D}_{s}|\leq\Lambda$.

Denote $\Delta_{s}^{\varepsilon,\delta}=H_{s}^{-1/2}\left(  Y_{s}%
^{\varepsilon}-Y_{s}^{\delta}\right)  $. We have%
\[%
\begin{array}
[c]{l}%
\Delta_{t}^{\varepsilon,\delta}=-%
{\displaystyle\int_{t}^{T}}
dH_{s}^{-1/2}\left(  Y_{s}^{\varepsilon}-Y_{s}^{\delta}\right)  -%
{\displaystyle\int_{t}^{T}}
H_{s}^{-1/2}d(Y_{s}^{\varepsilon}-Y_{s}^{\delta})\medskip\\
\quad=%
{\displaystyle\int_{t}^{T}}
d\mathcal{K}_{s}^{\varepsilon,\delta}-%
{\displaystyle\int_{t}^{T}}
\mathcal{Z}_{s}^{\varepsilon,\delta}dB_{s},
\end{array}
\]
where%
\[%
\begin{array}
[c]{l}%
d\mathcal{K}_{s}^{\varepsilon,\delta}=D_{s}\left(  Y_{s}^{\varepsilon}%
-Y_{s}^{\delta}\right)  ds+H_{s}^{-1/2}\left[  F\left(  s,Y_{s}^{\varepsilon
},Z_{s}^{\varepsilon}\right)  -F\left(  s,Y_{s}^{\delta},Z_{s}^{\delta
}\right)  \right]  ds\medskip\\
\quad\quad\quad-H_{s}^{-1/2}\left[  H_{s}\nabla\varphi_{\varepsilon}\left(
Y_{s}^{\varepsilon}\right)  -H_{s}\nabla\varphi_{\delta}(Y_{s}^{\delta
})\right]  ds\medskip\\
\quad\quad\quad=D_{s}\left(  Y_{s}^{\varepsilon}-Y_{s}^{\delta}\right)
ds+H_{s}^{-1/2}\left[  F\left(  s,Y_{s}^{\varepsilon},Z_{s}^{\varepsilon
}\right)  -F\left(  s,Y_{s}^{\delta},Z_{s}^{\delta}\right)  \right]
ds\medskip\\
\quad\quad\quad-H_{s}^{1/2}\left[  \nabla\varphi_{\varepsilon}\left(
Y_{s}^{\varepsilon}\right)  -\nabla\varphi_{\delta}(Y_{s}^{\delta})\right]  ds
\end{array}
\]
and $\mathcal{Z}_{s}^{\varepsilon,\delta}=H_{s}^{-1/2}\left(  Z_{s}%
^{\varepsilon}-Z_{s}^{\delta}\right)  $. By denoting with $C$ a generic
positive constant independent of $\varepsilon$ and $\delta$ that can change
from one line to another we obtain that%
\[%
\begin{array}
[c]{l}%
\left\langle \Delta_{s}^{\varepsilon,\delta},d\mathcal{K}_{s}^{\varepsilon
,\delta}\right\rangle \leq C\left(  |D_{s}|+L\left(  s\right)  \right)
|Y_{s}^{\varepsilon}-Y_{s}^{\delta}|^{2}ds+Cl|Y_{s}^{\varepsilon}%
-Y_{s}^{\delta}||Z_{s}^{\varepsilon}-Z_{s}^{\delta}|\medskip\\
\quad\quad\quad\quad\quad-\left\langle \nabla\varphi_{\varepsilon}\left(
Y_{s}^{\varepsilon}\right)  -\nabla\varphi_{\delta}(Y_{s}^{\delta}%
),Y_{s}^{\varepsilon}-Y_{s}^{\delta}\right\rangle \medskip\\
\quad\quad\quad\quad\leq C\left(  |D_{s}|+L\left(  s\right)  \right)
|Y_{s}^{\varepsilon}-Y_{s}^{\delta}|^{2}ds+Cl|Y_{s}^{\varepsilon}%
-Y_{s}^{\delta}||\mathcal{Z}_{s}^{\varepsilon,\delta}|ds\medskip\\
\quad\quad\quad\quad\quad+\left(  \varepsilon+\delta\right)  \left\vert
\nabla\varphi_{\varepsilon}\left(  Y_{s}^{\varepsilon}\right)  \right\vert
|\nabla\varphi_{\delta}(Y_{s}^{\delta})|ds.
\end{array}
\]
Therefore, from the formula of $\Delta_{s}^{\varepsilon,\delta}$ we have%
\[
\left\langle \Delta_{s}^{\varepsilon,\delta},d\mathcal{K}_{s}^{\varepsilon
,\delta}\right\rangle \leq\left(  \varepsilon+\delta\right)  \left\vert
\nabla\varphi_{\varepsilon}\left(  Y_{s}^{\varepsilon}\right)  \right\vert
|\nabla\varphi_{\delta}(Y_{s}^{\delta})|ds+|\Delta_{s}^{\varepsilon,\delta
}|^{2}dV_{s}+\frac{1}{4}|\mathcal{Z}_{s}^{\varepsilon,\delta}|^{2},
\]
where, for $\tilde{C}=\tilde{C}(l,a,b,\Lambda)>0$, $V_{t}=\tilde{C}\int
_{0}^{t}(|D_{s}|+L(s))ds$. We apply now Proposition \ref{ineq cond exp} (see
Annex 6.3) with $p\geq2,$ $\lambda=1/2,$ $D=N\equiv0$ and we obtain, for a
positive constant $C=C(l,a,b,p)$ and for $C_{1}>0$ well chosen,

$C_{1}\mathbb{E}\sup_{s\in\left[  0,T\right]  }|Y_{s}^{\varepsilon}%
-Y_{s}^{\delta}|^{p}+\mathbb{E}%
{\displaystyle\int_{0}^{T}}
|Z_{s}^{\varepsilon}-Z_{s}^{\delta}|^{2}ds\smallskip$

$\leq\mathbb{E}\sup_{s\in\left[  0,T\right]  }e^{pV_{s}}|\Delta_{s}%
^{\varepsilon,\delta}|^{p}+\mathbb{E}\left(
{\displaystyle\int_{0}^{T}}
e^{2V_{s}}|\mathcal{Z}_{s}^{\varepsilon,\delta}|^{2}ds\right)  ^{p/2}%
\smallskip$

$\leq C(\varepsilon+\delta)\mathbb{E}\left(
{\displaystyle\int_{0}^{T}}
e^{2V_{s}}\left\vert \nabla\varphi_{\varepsilon}\left(  Y_{s}^{\varepsilon
}\right)  \right\vert |\nabla\varphi_{\delta}(Y_{s}^{\delta})|ds\right)
^{p/2}\smallskip$

$\leq C(\varepsilon+\delta)\left(  \mathbb{E}\left(
{\displaystyle\int_{0}^{T}}
|\nabla\varphi_{\varepsilon}\left(  Y_{s}^{\varepsilon}\right)  |^{2}%
ds\right)  ^{p/2}+\mathbb{E}\left(
{\displaystyle\int_{0}^{T}}
|\nabla\varphi_{\delta}\left(  Y_{s}^{\delta}\right)  |^{2}ds\right)
^{p/2}\right)  ,\smallskip$

\noindent which implies, according to (\ref{ineq5}), that $\left\{
Y^{\varepsilon}:0<\varepsilon\leq1\right\}  $ is a Cauchy sequence.

With standard arguments, passing to the limit in the approximating equation
(\ref{approximating eq for general case}) we infer that%
\[
Y_{t}+\int_{t}^{T}H_{s}U_{s}ds=\eta+\int_{t}^{T}F(s,Y_{s},Z_{s})ds-\int
_{t}^{T}Z_{s}dB_{s},\quad\forall t\in\left[  0,T\right]  .
\]
From (\ref{space of sol for approximating equation}), by Fatou's Lemma,
(\ref{boundedness with weight}) easily follows. Moreover, since $\nabla
\varphi_{\varepsilon}(x)\in\partial\varphi(J_{\varepsilon}x)$ we have, on the
subsequence $\varepsilon_{n}$,%
\[
\mathbb{E}%
{\displaystyle\int_{0}^{T}}
\left\langle \nabla\varphi_{\varepsilon_{n}}(Y_{t}^{\varepsilon_{n}}%
),v_{t}-Y_{t}^{\varepsilon_{n}}\right\rangle dt+\mathbb{E}%
{\displaystyle\int_{0}^{T}}
\varphi(J_{\varepsilon_{n}}(Y_{t}^{\varepsilon_{n}}))dt\leq\mathbb{E}%
{\displaystyle\int_{0}^{T}}
\varphi(v_{t})dt,
\]
for every progressively measurable continuous stochastic process $v$. Hence
$U_{s}\in\partial\varphi(Y_{s})$ for every $s\in\left[  0,T\right]  ,$
$\mathbb{P}-a.s.~\omega\in\Omega$ and we can conclude that the triplet
$\left(  Y,Z,K\right)  $ is a strong solution of the $BSVI\left(  H\left(
t\right)  ,\varphi,F\right)  $.\medskip

\noindent\textbf{Uniqueness. }Suppose that the $BSVI\left(  H\left(  t\right)
,\varphi,F\right)  $ admits two strong solutions, denoted by $\left(
Y,Z,K\right)  $ and respectively $(\tilde{Y},\tilde{Z},\tilde{K})$, with the
processes $K$ and $\tilde{K}$ represented as%
\[
K_{t}=\int_{0}^{t}U_{s}ds\quad\text{and}\quad\tilde{K}_{t}=\int_{0}^{t}%
\tilde{U}_{s}ds.
\]
Following the same arguments found in the existence part of the theorem,
denoting $\Delta_{s}=H_{s}^{-1/2}(Y_{s}-\tilde{Y}_{s})$, we have%
\[
\Delta_{t}=%
{\displaystyle\int_{t}^{T}}
d\mathcal{K}_{s}-%
{\displaystyle\int_{t}^{T}}
\mathcal{Z}_{s}dB_{s},
\]
where%
\[
d\mathcal{K}_{s}=D_{s}(Y_{s}-\tilde{Y}_{s})ds+H_{s}^{-1/2}[F\left(
s,Y_{s},Z_{s}\right)  -F(s,\tilde{Y}_{s},\tilde{Z}_{s})]ds-H_{s}^{1/2}%
(U_{s}-\tilde{U}_{s})ds
\]
and $\mathcal{Z}_{s}=H_{s}^{-1/2}(Z_{s}-\tilde{Z}_{s})$.

Since $Y$ and $\tilde{Y}$ are two solutions of the equation, $U_{s}\in
\partial\varphi(Y_{s})$ and $\tilde{U}_{s}\in\partial\varphi(\tilde{Y}_{s})$,
$\forall s\in\left[  0,T\right]  $, $\mathbb{P}-a.s.~\omega\in\Omega$,%
\[
\left\langle Y_{s}-\tilde{Y}_{s},U_{s}-\tilde{U}_{s}\right\rangle \geq0
\]
and we obtain, for a positive constant $\bar{C}=\bar{C}(l,a,b)$,%
\[%
\begin{array}
[c]{l}%
\left\langle \Delta_{s},d\mathcal{K}_{s}\right\rangle \leq C\left(
|D_{s}|+L\left(  s\right)  \right)  |Y_{s}-\tilde{Y}_{s}|^{2}ds+Cl|Y_{s}%
-\tilde{Y}_{s}||\mathcal{Z}_{s}|ds\bigskip\\
\quad\quad\quad\quad~\leq\bar{C}|\Delta_{s}|^{2}(|D_{s}|+L(s))ds+\dfrac{1}%
{4}|\mathcal{Z}_{s}|^{2}.
\end{array}
\]
Since%
\[
\mathbb{E}\sup_{t\in\left[  0,T\right]  }(e^{pV_{t}}|\Delta_{t}|^{p})\leq
C\mathbb{E}\sup_{t\in\left[  0,T\right]  }|Y_{t}-\tilde{Y}_{t}|^{p}<+\infty
\]
we obtain by Proposition \ref{ineq cond exp} (see Annex 6.3) that%
\[
e^{pV_{t}}|\Delta_{t}|^{p}\leq\mathbb{E}^{\mathcal{F}_{t}}e^{pV_{T}}%
|\Delta_{T}|^{p}=0
\]
and the uniqueness of a strong solution for $BSVI\left(  H\left(  t\right)
,\varphi,F\right)  $ easily follows.\hfill
\end{proof}

\begin{remark}
Inequality (\ref{ineq2}) permits us to derive now some more estimations
regarding the limit processes. We write (\ref{ineq2}) for $s=T$ and, since
$\varphi(J_{\varepsilon}\left(  x\right)  )\leq\varphi_{\varepsilon}%
(x)\leq\varphi(x)$, by passing to $\liminf_{\varepsilon\rightarrow0}$ in
(\ref{ineq2}), we have for all $t\in\left[  0,T\right]  $, $\mathbb{P}%
-a.s.~\omega\in\Omega$,%
\begin{equation}%
\begin{array}
[c]{l}%
|Y_{t}|^{2}+\varphi(Y_{t})+a%
{\displaystyle\int_{t}^{T}}
|U_{r}|^{2}dr+%
{\displaystyle\int_{t}^{T}}
|Z_{r}|^{2}dr\leq|\eta|^{2}+\varphi(\eta)+2%
{\displaystyle\int_{t}^{T}}
\left\langle Y_{r},F(r,Y_{r},Z_{r})\right\rangle dr\medskip\\
\quad\quad\quad\quad\quad\quad\quad\quad\quad\quad\quad+%
{\displaystyle\int_{t}^{T}}
\left\langle U_{r},F(r,Y_{r},Z_{r})-2H_{r}Y_{r}\right\rangle dr-%
{\displaystyle\int_{t}^{T}}
\left\langle 2Y_{r}+U_{r},Z_{r}dB_{r}\right\rangle .
\end{array}
\label{estimation from remark}%
\end{equation}

\end{remark}

\section{Weak existence for $H\left(  t,y\right)  $}

We will continue in this section the proof of Theorem
\ref{Th. for weak existence}. All the apriori estimates obtained in Section 3
remain valid. In Section 4 we proved that the approximating sequence given by
BSDE (\ref{approximating eq for general case}) is a Cauchy sequence when the
matrix $H$ does not depend on the state of the system and, as a consequence,
we derived the existence and uniqueness of a strong solution for $BSVI\left(
H\left(  t\right)  ,\varphi,F\right)  $. In the current setup, allowing the
dependence on $Y$ we will situate ourselves in a Markovian framework and we
will use tightness criteria in order to prove the existence of a weak solution
for $BSVI\left(  H\left(  t,y\right)  ,\varphi,F\right)  $.

First let $b:\left[  0,T\right]  \times\mathbb{R}^{k}\rightarrow\mathbb{R}%
^{k}$, $\sigma:\left[  0,T\right]  \times\mathbb{R}^{k}\rightarrow
\mathbb{R}^{k\times k}$ be two continuous functions satisfying the classical
Lipschitz conditions, which imply the existence of a non-exploding solution
for the following SDE%
\begin{equation}
X_{s}^{t,x}=x+%
{\displaystyle\int_{t}^{s}}
b(r,X_{r}^{t,x})dr+%
{\displaystyle\int_{t}^{s}}
\sigma(r,X_{r}^{t,x})dB_{r},\quad t\leq s\leq T.
\label{SDE Markovian framework}%
\end{equation}
According to Friedmann \cite{Friedman:75} it follows that, for every
$(t,x)\in\left[  0,T\right]  \times\mathbb{R}^{k}$, the equation
(\ref{SDE Markovian framework}) admits a unique solution $X^{t,x}$. Moreover,
for $p\geq1$, there exists a positive constant $C_{p,T}$ such that%
\begin{equation}
\left\{
\begin{array}
[c]{l}%
\mathbb{E}\sup\nolimits_{s\in\left[  0,T\right]  }|X_{s}^{t,x}|^{p}\leq
C_{p,T}(1+|x|^{p})\quad\text{and}\medskip\\
\mathbb{E}\sup\nolimits_{s\in\left[  0,T\right]  }|X_{s}^{t,x}-X_{s}%
^{t^{\prime},x^{\prime}}|^{p}\leq C_{p,T}(1+|x|^{p})(|t-t^{\prime}%
|^{p/2}+|x-x^{\prime}|^{p}),
\end{array}
\right.  \label{estimations sol forward eq}%
\end{equation}
for all $x,x^{\prime}\in\mathbb{R}^{k}$ and $t,t^{\prime}\in\left[
0,T\right]  $.

Let now consider the continuous generator function $F:\left[  0,T\right]
\times\mathbb{R}^{k}\times\mathbb{R}^{d}\rightarrow\mathbb{R}^{d}$ and assume
there exist $L\in L^{2}\left(  0,T;\mathbb{R}_{+}\right)  $ such that, for all
$t\in\left[  0,T\right]  $ and $x\in\mathbb{R}^{k}$,%
\begin{equation}
\left\vert F(t,x,y^{\prime})-F(t,x,y)\right\vert \leq L\left(  t\right)
|y^{\prime}-y|\text{,}\quad\text{\textit{for all} }y,y^{\prime}\in
\mathbb{R}^{d}, \tag{$H_4^\prime$}\label{hypothesis on F - weak sol}%
\end{equation}
Given a continuous function $g:\mathbb{R}^{k}\rightarrow\mathbb{R}^{d}$,
satisfying a sublinear growth condition, consider now the $BSVI\left(
H\left(  t,y\right)  ,\varphi,F\right)  $%
\begin{equation}
\left\{
\begin{array}
[c]{l}%
Y_{s}^{t,x}+%
{\displaystyle\int_{s}^{T}}
H(r,Y_{r}^{t,x})dK_{r}^{t,x}=g(X_{T}^{t,x})+%
{\displaystyle\int_{s}^{T}}
F(r,X_{r}^{t,x},Y_{r}^{t,x})dr-%
{\displaystyle\int_{s}^{T}}
Z_{r}^{t,x}dB_{r},\quad t\leq s\leq T,\smallskip\\
dK_{r}^{t,x}\in\partial\varphi(Y_{r}^{t,x})\left(  dr\right)  ,\quad\text{for
every }r\text{.}%
\end{array}
\right.  \label{BSVI Markovian}%
\end{equation}

\begin{remark}
The utility of studying the notion of weak solution for our problem is
justified by the non-linear Feynman-Ka\c{c} representation formula. Following
the same arguments as the one from \cite{Pardoux/Rascanu:98}, for $k=1$, it
can easily be proven that $u(t,x)=Y_{t}^{t,x}$ is a continuous function and it
represents a viscosity solution for the following semilinear parabolic PDE:%
\[
\left\{
\begin{array}
[c]{l}%
\dfrac{\partial u}{\partial t}(t,x)+\mathcal{A}_{t}u(t,x)+F(t,x,u(t,x))\in
H(t,u(t,x))\partial\varphi(u(t,x)),\smallskip\\
(t,x)\in\lbrack0,T)\times\mathbb{R}^{k}\quad\quad\text{and}\quad\quad
u(T,x)=g(x),\quad\forall x\in\mathbb{R}^{k},
\end{array}
\right.
\]
where the operator $\mathcal{A}_{t}$ is the infinitesimal generator of the
Markov process $\{X_{s}^{t,x},$ $t\leq s\leq T\}$ and it is given by%
\[
\mathcal{A}_{t}v(x)=\frac{1}{2}\mathbf{Tr}[(\sigma\sigma^{\ast})(t,x)D^{2}%
v(x)]+\left\langle b(t,x),\nabla v(x)\right\rangle .
\]
However, for the multi-dimensional case, the situation changes and the proof
of the existence and uniqueness of a viscosity solution for the above system
of parabolic variational inequalities must follow the approach from Maticiuc,
Pardoux, R\u{a}\c{s}canu and Z\u{a}linescu
\cite{Maticiuc/Pardoux/Rascanu/Zalinescu:10}.
\end{remark}

\noindent More details concerning the restriction to the case when the
generator function does not depend on $Z$ can be found in the comments from
Pardoux \cite{Pardoux:99}, Section 6, page 535. Assume also that all
hypothesis given by $(H_{2})$ still hold for the deterministic matrix
$H:\left[  0,T\right]  \times\mathbb{R}^{d}\rightarrow\mathbb{R}^{d\times d}$.
For the clarity of the presentation we will omit writing the superscript
$t,x$, especially when dealing with sequences of approximating equations and solutions.

Consider now the Skorokhod space $\mathcal{D}(\left[  0,T\right]
;\mathbb{R}^{m})$ of c\`{a}dl\`{a}g functions $y:\left[  0,T\right]
\rightarrow\mathbb{R}^{m}$ (i.e. right continuous and with left-hand side
limit). It can be shown (see Billingsley \cite{Billingsley:99}) that, although
$\mathcal{D}(\left[  0,T\right]  ;\mathbb{R}^{m})$ is not a complete space
with respect to the Skorokhod metric, there exists a topologically equivalent
metric with respect to which it is complete and that the Skorokhod space is a
Polish space. The space of continuous functions $C(\left[  0,T\right]
;\mathbb{R}^{m})$, equipped with the supremum norm topology is a subspace of
$\mathcal{D}(\left[  0,T\right]  ;\mathbb{R}^{m})$; the Skorokhod topology
restricted to $C(\left[  0,T\right]  ;\mathbb{R}^{m})$ coincides with the
uniform topology. We will use on $\mathcal{D}(\left[  0,T\right]
;\mathbb{R}^{m})$ the Meyer-Zheng topology, which is the topology of
convergence in measure on $\left[  0,T\right]  $, weaker than the Skorokhod
topology. The Borel $\sigma-$field for the Meyer-Zheng topology is the
canonical $\sigma-$field as for Skorokhod topology. Note that for the
Meyer-Zheng topology, $\mathcal{D}(\left[  0,T\right]  ;\mathbb{R}^{m})$ is a
metric space but not a Polish space. Contrary to the Skorokhod topology, the
Meyer-Zheng topology on the product space is the product topology.$\smallskip$

\noindent We continue now the proof of Theorem \ref{Th. for weak existence}%
.\medskip

\begin{proof}
For any fixed $n\geq1$ consider the following approximating equation, which is
in fact BSDE (\ref{approximating eq for general case}) from Section 3, adapted
to our new setup. We have, $\mathbb{P}-a.s.~\omega\in\Omega$,%
\begin{equation}
Y_{t}^{n}+%
{\displaystyle\int_{t}^{T}}
H\left(  s,Y_{s}^{n}\right)  \nabla\varphi_{1/n}\left(  Y_{s}^{n}\right)
ds=g(X_{T}^{t,x})+%
{\displaystyle\int_{t}^{T}}
F\left(  s,X_{s},Y_{s}^{n}\right)  ds-%
{\displaystyle\int_{t}^{T}}
Z_{s}^{n}dB_{s},\quad\forall t\in\left[  0,T\right]  .
\label{approx eq for weak existence}%
\end{equation}
The estimations obtained in Section 3, Lemma
\ref{Lemma with the estimations from Step 1} apply also to the triplet
$(Y^{n},Z^{n},U^{n})=(Y^{n},Z^{n},\nabla\varphi_{1/n}\left(  Y^{n}\right)
),$which satisfies the uniform boundedness condition given by
(\ref{ineq Y,Z,U}) with the positive constant $C=C(a,b,\Lambda,L(\cdot))$ now
independent of $n$. We will prove a weakly convergence in the sense of the
Meyer-Zheng topology, that is the laws converge weakly if we equip the space
of paths with the topology of convergence in $dt-$measure.$\smallskip$

In the sequel we will employ the following notations:%
\[
M_{t}^{n}=%
{\displaystyle\int_{0}^{t}}
Z_{s}^{n}dB_{s}\quad\text{and}\quad K_{t}^{n}=%
{\displaystyle\int_{0}^{t}}
\nabla\varphi_{1/n}\left(  Y_{s}^{n}\right)  ds.
\]
Our goal is to prove the tightness of the sequence $\{Y^{n},M^{n}\}_{n}$ with
respect to the Meyer-Zheng topology. For doing this we must prove the uniform
boundedness (with respect to $n$) for quantities of the type%
\[
\mathrm{CV}_{T}\left(  \Psi\right)  +\mathbb{E}\sup_{s\in\lbrack0,T]}|\Psi
_{s}|,
\]
where the conditional variation $\mathrm{CV}_{T}$ is defined for any adapted
process $\Psi$ with paths a.s. in $\mathcal{D}(\left[  0,T\right]
;\mathbb{R}^{m})$ and with $\Psi_{t}$ a integrable random variable, for all
$t\in\left[  0,T\right]  $. The conditional variation of $\Psi$ is given by%
\begin{equation}
\mathrm{CV}_{T}(\Psi)\overset{def}{=}\sup_{\pi}{\sum_{i=0}^{m-1}{\mathbb{E}%
}\Big[{\big|}}\mathbb{E}^{{{\mathcal{F}}_{t_{i}}}}{[\Psi_{t_{i+1}}-\Psi
_{t_{i}}]{\big|}\Big],} \label{def cond var}%
\end{equation}
where the supremum is taken over all the partitions $\pi:t=t_{0}<t_{1}%
<\cdots<t_{m}=T$. If $\mathrm{CV}_{T}(\Psi)<\infty$ then the process $\Psi$ is
called a quasi-martingale. It is clear that if $\Psi$ is a martingale then
$\mathrm{CV}_{T}(\Psi)=0$.

We will denote by $C$ a generic constant that can vary from one line to
another, but which remains independent of $n$. Since $M^{n}$ is a
$\mathcal{F}_{t}^{B}-$martingale, we have, by using the hypothesis on $F$ and
the boundedness of $H$,

$\mathrm{CV}_{T}(Y^{n})=\sup\limits_{\pi}%
{\displaystyle\sum\limits_{i=0}^{m-1}}
{{\mathbb{E}}\Big[{\big|}}\mathbb{E}^{{{\mathcal{F}}_{t_{i}}}}{[Y_{t_{i+1}%
}^{n}-Y_{t_{i}}^{n}]{\big|}\Big]\leq}\mathbb{E}%
{\displaystyle\int_{0}^{T}}
|F(s,X_{s},Y_{s}^{n})|ds+%
{\displaystyle\int_{0}^{T}}
|H(Y_{s}^{n})|d\left\updownarrow K^{n}\right\updownarrow _{s}$\medskip

$\quad\quad\quad\quad\leq C\mathbb{E}%
{\displaystyle\int_{0}^{T}}
(1+|Y_{s}^{n}|)ds+b\left\updownarrow K^{n}\right\updownarrow _{T}.\smallskip$

\noindent Since $\left\updownarrow K^{n}\right\updownarrow _{T}=%
{\displaystyle\int_{0}^{T}}
|U_{s}^{n}|ds\leq\sqrt{T}{\Big(}%
{\displaystyle\int_{0}^{T}}
|U_{s}^{n}|^{2}ds{\Big)}^{1/2}\leq C$ it infers, along with the uniform
boundedness condition given by (\ref{ineq Y,Z,U}) that%
\[
\sup_{n\geq1}{\Big(}\mathrm{CV}_{T}(Y^{n})+\mathbb{E}\sup_{s\in\left[
0,T\right]  }|Y_{s}^{n}|{\Big)}<+\infty.
\]
\noindent For the rest of the quantities, by standard calculus and using
(\ref{ineq Y,Z,U}) we have the following estimations.$\smallskip$

$\mathrm{CV}_{T}(M^{n})=0$ because $M^{n}$ is a $\mathcal{F}_{t}-$martingale.
Using the Burkholder-Davis-Gundy inequality we obtain the second boundedness
which involves $M^{n}$.\medskip

$\mathbb{E}\sup\limits_{t\in\left[  0,T\right]  }|M_{t}^{n}|=\mathbb{E}%
\sup\limits_{t\in\left[  0,T\right]  }\left\vert
{\displaystyle\int_{0}^{t}}
Z_{s}^{n}dB_{s}\right\vert \leq3\mathbb{E}\left(
{\displaystyle\int_{0}^{T}}
|Z_{s}^{n}|^{2}ds\right)  ^{1/2}\leq3\left(  \mathbb{E}%
{\displaystyle\int_{0}^{T}}
|Z_{s}^{n}|^{2}ds\right)  ^{1/2}\leq C$.\medskip

\noindent Therefore, taking the supremum over $n\geq1$ we obtain that the
conditions from the tightness criteria in $\mathcal{D}(\left[  0,T\right]
;\mathbb{R}^{d})\times\mathcal{D}(\left[  0,T\right]  ;\mathbb{R}^{d}%
)[\equiv\mathcal{D}(\left[  0,T\right]  ;\mathbb{R}^{d+d})]$ for the sequence
$\{(Y^{n},M^{n})\}_{n}$ are verified. Using the Prohorov theorem, we have that
there exists a subsequence, still denoted with $n$, such that, as
$n\rightarrow\infty$,%
\[
(X,B,Y^{n},M^{n})\longrightarrow(X,B,Y,M),\quad\text{in law}%
\]
in $C(\left[  0,T\right]  ;\mathbb{R}^{k+k})\times\mathcal{D}(\left[
0,T\right]  ;\mathbb{R}^{d+d})$. We equipped the previous space with the
product of the topology of uniform convergence on the first factor and the
topology of convergence in measure on the second factor. For each $0\leq s\leq
t$, the mapping $\left(  x,y\right)  \rightarrow\int_{s}^{t}F(x(r),y(r))dr$ is
continuous from $C(\left[  0,T\right]  ;\mathbb{R}^{k})\times\mathcal{D}%
(\left[  0,T\right]  ;\mathbb{R}^{d})$ topologically equipped in the same
manner, into $\mathbb{R}$. By the Skorokhod theorem, we can choose now a
probability space $\left(  \bar{\Omega},\mathcal{\bar{F}},\mathbb{\bar{P}%
}\right)  $ (it is in fact $([0,1],\mathcal{B}_{[0,1]},\mu)$) on which we
define the processes%
\[
\{(\bar{X}^{n},\bar{B}^{n},\bar{Y}^{n},\bar{M}^{n})\}_{n}\quad\text{and}%
\quad(\bar{X},\bar{B},\bar{Y},\bar{M}),
\]
having the same law as $\{(X,B,Y^{n},M^{n})\}_{n}$ and $(X,B,Y,M)$,
respectively, such that, in the product space $C(\left[  0,T\right]
;\mathbb{R}^{k+k})\times\mathcal{D}(\left[  0,T\right]  ;\mathbb{R}^{d+d})$,
as $n\rightarrow\infty$,%
\[
(\bar{X}^{n},\bar{B}^{n},\bar{Y}^{n},\bar{M}^{n})\overset{\mathbb{\bar{P}%
-}a.s.}{\longrightarrow}(\bar{X},\bar{B},\bar{Y},\bar{M}).
\]
Moreover, for each $n\in\mathbb{N}^{\ast}$, $(\bar{X}^{n},\bar{Y}^{n})$
satisfy, for $t\in\left[  0,T\right]  $, $\mathbb{\bar{P}}-a.s.~\omega\in
\bar{\Omega}$,%
\begin{equation}
d\bar{X}_{s}^{n}=b(s,\bar{X}_{s}^{n})ds+\sigma(s,\bar{X}_{s}^{n})d\bar{B}%
_{s}^{n},\quad t\leq s\leq T,\quad\bar{X}_{t}^{n}=x\quad\text{and}
\label{n_bar_forward_equation}%
\end{equation}%
\begin{equation}
\bar{Y}_{t}^{n}+%
{\displaystyle\int_{t}^{T}}
H\left(  s,\bar{Y}_{s}^{n}\right)  \nabla\varphi_{1/n}\left(  \bar{Y}_{s}%
^{n}\right)  ds=g(\bar{X}_{T}^{n})+%
{\displaystyle\int_{t}^{T}}
F\left(  s,\bar{X}_{s}^{n},\bar{Y}_{s}^{n}\right)  ds-(\bar{M}_{T}^{n}-\bar
{M}_{t}^{n}). \label{n_bar_equation}%
\end{equation}
We focus now to the issue of passing to the limit and to the identification of
a solution for our problem. Since $dK_{s}^{n}=\nabla\varphi_{1/n}(Y_{s}%
^{n})ds\in\partial\varphi(J_{n}(Y_{s}^{n}))(ds)$ we have, for all
$v\in\mathbb{R}^{d}$ and $0\leq t\leq s_{1}\leq s_{2}$,%
\[
\int_{s_{1}}^{s_{2}}\varphi(J_{n}(Y_{s}^{n}))ds\leq\int_{s_{1}}^{s_{2}}%
(J_{n}(Y_{s}^{n})-v)\nabla\varphi_{1/n}(Y_{s}^{n})ds+\int_{s_{1}}^{s_{2}%
}\varphi(v)ds.
\]
Using similar arguments to the ones found in Pardoux and R\u{a}\c{s}canu
\cite{Pardoux/Rascanu:09}, Proposition 1.19, it easily follows that, also for
all $v\in\mathbb{R}^{d},$ $0\leq t\leq s_{1}\leq s_{2}$ and every
$A\in\mathcal{\bar{F}},$%
\begin{equation}
\mathbb{\bar{E}}\int_{s_{1}}^{s_{2}}\mathbf{1}_{A}\varphi(J_{n}(\bar{Y}%
_{s}^{n}))ds\leq\mathbb{\bar{E}}\int_{s_{1}}^{s_{2}}\mathbf{1}_{A}(J_{n}%
(\bar{Y}_{s}^{n})-v)\nabla\varphi_{1/n}(\bar{Y}_{s}^{n})ds+\mathbb{\bar{E}%
}\int_{s_{1}}^{s_{2}}\mathbf{1}_{A}\varphi(v)ds, \label{n_bar_inequality}%
\end{equation}
that is, $\mathbb{\bar{P}-}a.s.~\omega\in\bar{\Omega},$ $\nabla\varphi
_{1/n}(\bar{Y}_{s}^{n})\in\partial\varphi(J_{n}(\bar{Y}_{s}^{n}))$, for all
$s\in\left[  t,T\right]  $. We write (\ref{ineq Y,Z,U}) for $\bar{Y}^{n}$ and,
by using the definition of the Yosida approximation, we obtain that there
exists a positive constant $C$, independent of $n$, such that $\mathbb{E}%
{\textstyle\int_{0}^{T}}
|Y_{s}^{n}-J_{n}(Y_{s}^{n})|^{2}ds\leq\frac{1}{n^{2}}C$. The fact that
$Y^{n}\overset{\mathcal{L}}{\sim}\bar{Y}^{n}$ yields%
\[
\mathbb{\bar{E}}%
{\displaystyle\int_{0}^{T}}
|\bar{Y}_{s}^{n}-J_{n}(\bar{Y}_{s}^{n})|^{2}ds\leq\frac{1}{n^{2}}C.
\]
Consequently, $\bar{Y}^{n}-J_{n}(\bar{Y}^{n})\longrightarrow0$ as
$n\rightarrow\infty$ in $L^{2}(\bar{\Omega}\times(0,T);\mathbb{R}^{d})$.
Therefore, $J_{n}(\bar{Y}^{n})$ converges also in $L^{2}(\bar{\Omega}%
\times(0,T);\mathbb{R}^{d})$ to $\bar{Y}$ when $n\rightarrow\infty$. The
boundedness (\ref{ineq Y,Z,U}) also implies the existence of a process
$\bar{U}$ such that%
\[
\nabla\varphi_{1/n}(\bar{Y}^{n})\rightharpoonup\bar{U}\quad\text{as
}n\rightarrow\infty\text{, in}\quad L^{2}(\bar{\Omega}\times(0,T);\mathbb{R}%
^{d}).
\]
In addition, passing to $\liminf_{n\rightarrow+\infty}$ in
(\ref{n_bar_inequality}), due to the lower-semicontinuity of $\varphi$ we
obtain, for all $v\in\mathbb{R}^{d}$ and all $0\leq t\leq s_{1}\leq s_{2}$,
$\mathbb{P}-a.s.~\omega\in\Omega$,%
\[
\int_{s_{1}}^{s_{2}}\varphi(\bar{Y}_{s})ds\leq\int_{s_{1}}^{s_{2}}(\bar{Y}%
_{s}-v)\bar{U}_{s}ds+\int_{s_{1}}^{s_{2}}\varphi(v)ds,
\]
which means $d\bar{K}_{s}\overset{def}{=}\bar{U}_{s}ds\in\partial\varphi
(\bar{Y}_{s})(ds)$.

Finally, we pass to the limit, as $n\rightarrow\infty$, in the equations
(\ref{n_bar_forward_equation}) and (\ref{n_bar_equation}). The convergence of
$(\bar{X}^{n},\bar{B}^{n},\bar{Y}^{n},\bar{M}^{n})$ to $(\bar{X},\bar{B}%
,\bar{Y},\bar{M})$ implies, $\mathbb{P}-a.s.~\omega\in\Omega$,%
\[
\bar{X}_{s}=x+%
{\displaystyle\int_{t}^{s}}
b(r,\bar{X}_{r})dr+%
{\displaystyle\int_{t}^{s}}
\sigma(r,\bar{X}_{r})d\bar{B}_{r},\quad t\leq s\leq T
\]
and%
\[
\bar{Y}_{t}+%
{\displaystyle\int_{t}^{T}}
H\left(  s,\bar{Y}_{s}\right)  \bar{U}_{s}ds=g(\bar{X}_{T})+%
{\displaystyle\int_{t}^{T}}
F\left(  s,\bar{X}_{s},\bar{Y}_{s}\right)  ds-(\bar{M}_{T}-\bar{M}_{t}).
\]
Since the processes $\bar{Y}$ and $\bar{M}$ are c\`{a}dl\`{a}g the above
equality takes place for any $t\in\left[  0,T\right]  $.

Summarizing, we obtained that the collection $(\bar{\Omega},\mathcal{\bar{F}%
},\mathbb{\bar{P}},\mathcal{F}_{t}^{\bar{Y},\bar{M}},\bar{Y}_{t},\bar{M}%
_{t},\bar{K}_{t})_{t\in\left[  0,T\right]  }$ is a weak solution of
Eq.(\ref{BSVI Markovian}), in the sense of Definition
(\ref{Def of weak solution}), and the proof is now complete.

\hfill
\end{proof}

\begin{remark}
Alternatively, one can use another approximating equation instead of
(\ref{approx eq for weak existence}) to prove the existence of a weak
solution. This new approach comes with additional benefits from the
perspective of constructing numerical approximating schemes for our stochastic
variational inequality. For $n\in\mathbb{N}^{\ast}$ we consider a partition of
the time interval $\left[  0,T\right]  $ of the form $0=t_{0}<t_{1}%
<...<t_{n}=T$ with $t_{i}=\frac{iT}{n}$ for every $i=\overline{0,n-1}$ and
define%
\begin{equation}
\left\{
\begin{array}
[c]{l}%
Y_{t_{n}}^{n}=\eta,\medskip\\
Y_{t}^{n}+%
{\displaystyle\int_{t}^{t_{i+1}}}
H_{s}^{n}dK_{s}^{n}=Y_{t_{i+1}}^{n}+%
{\displaystyle\int_{t}^{t_{i+1}}}
F(s,X_{s},Y_{s}^{n})ds-%
{\displaystyle\int_{t}^{t_{i+1}}}
Z_{s}^{n}dB_{s},\text{ }\forall t\in\lbrack t_{i},t_{i+1}),\medskip\\
dK_{s}^{n}=U_{s}^{n}ds\in\partial\varphi(Y_{s}^{n})(ds),
\end{array}
\right.  \label{alternatively approx eq for weak existence}%
\end{equation}
where, for $s\in\lbrack\frac{iT}{n},\frac{(i+1)T}{n})$,%
\[
H_{s}^{n}\overset{def}{=}\frac{n}{T}%
{\displaystyle\int_{s-\frac{T}{n}}^{s}}
\mathbb{E}^{\mathcal{F}_{r}}\left(  H\left(  r,Y_{r+\frac{2T}{n}}^{n}\right)
\right)  dr.
\]
For the consistence of (\ref{alternatively approx eq for weak existence}) we
must extend $Y_{t}^{n}=\eta$, $U_{t}^{n}=0$ for $t\notin\left[  0,T\right]  $
and, $\mathbb{P}-a.s.~\omega\in\Omega$, $U_{t}^{n}\in\partial\varphi(Y_{t}%
^{n})$ a.e. $t\in(0,T)$. The application $s\rightarrow H_{s}^{n}$ is a bounded
$C^{1}$ progressively measurable matrix on each interval $(t_{i},t_{i+1})$;
$H^{n}$ and its inverse $[H^{n}]^{-1}$ satisfy (\ref{hypothesis on H}). We
highlight that all the constants that appear in (\ref{hypothesis on H}) remain
independent of $n$. Also, it is clear that, for any continuous process $V$,%
\[
\frac{n}{T}%
{\displaystyle\int_{s-\frac{T}{n}}^{s}}
\mathbb{E}^{\mathcal{F}_{r}}\left(  H\left(  r,V_{r+\frac{2T}{n}}\right)
\right)  dr\underset{n\rightarrow\infty}{\longrightarrow}H(s,V_{s}).
\]
By Theorem \ref{Th. for strong existence} the triplet $(Y^{n},Z^{n},U^{n})$ is
uniquely defined by Eq.(\ref{alternatively approx eq for weak existence}) as
its strong solution. One can rewrite Eq.
(\ref{alternatively approx eq for weak existence}) under a global form on the
entire time interval $\left[  0,T\right]  $. We have, $\mathbb{P}%
-a.s.~\omega\in\Omega$,%
\begin{equation}
\left\{
\begin{array}
[c]{l}%
Y_{t_{n}}^{n}=\eta,\medskip\\
Y_{t}^{n}+%
{\displaystyle\int_{t}^{T}}
H_{s}^{n}dK_{s}^{n}=\eta+%
{\displaystyle\int_{t}^{T}}
F(s,X_{s},Y_{s}^{n})ds-%
{\displaystyle\int_{t}^{T}}
Z_{s}^{n}dB_{s},\text{ }\forall t\in\left[  0,T\right]  ,\medskip\\
dK_{s}^{n}=U_{s}^{n}ds\in\partial\varphi(Y_{s}^{n})(ds)
\end{array}
\right.  \label{alternatively global approx eq for weak existence}%
\end{equation}
and we obtain that the triplet $(Y^{n},Z^{n},U^{n})$ satisfies a boundedness
property similar to (\ref{ineq Y,Z,U}). This permits us to prove, in the same
manner as in Theorem \ref{Th. for weak existence}, the tightness criteria
followed by the existence of a weak solution.
\end{remark}

\section{Annex}

For the clarity of the proofs from the main body of this article we will group
in this section some useful results that are used throughout this paper. For
more details the interested reader can consult the monograph of Pardoux and
R\u{a}\c{s}canu \cite{Pardoux/Rascanu:09}.

\subsection{BSDEs with Lipschitz coefficient}

We first introduce the spaces that will appear in the next results. Denote by
$S_{d}^{p}\left[  0,T\right]  $, $p\geq0$, the space of progressively
measurable continuous stochastic processes $X:\Omega\times\left[  0,T\right]
\rightarrow\mathbb{R}^{d}$, such that%
\[
\left\Vert X\right\Vert _{S_{d}^{p}}=\left\{
\begin{array}
[c]{ll}%
\left(  \mathbb{E}\left\Vert X\right\Vert _{T}^{p}\right)  ^{\frac{1}{p}%
\wedge1}<{\infty}, & \;\text{if }p>0,\medskip\\
\mathbb{E}\left[  1\wedge\left\Vert X\right\Vert _{T}\right]  , & \;\text{if
}p=0,
\end{array}
\right.
\]
where $\left\Vert X\right\Vert _{T}=\sup_{t\in\left[  0,T\right]  }\left\vert
X_{t}\right\vert $. The space $(S_{d}^{p}\left[  0,T\right]  ,\left\Vert
\cdot\right\Vert _{S_{d}^{p}}),\ p\!\geq1,$ is a Banach space and $S_{d}%
^{p}\left[  0,T\right]  $, $0\leq p<1$, is a complete metric space with the
metric $\rho(Z_{1},Z_{2})=\left\Vert Z_{1}-Z_{2}\right\Vert _{S_{d}^{p}}$
(when $p=0$ the metric convergence coincides with the probability convergence).

Denote by $\Lambda_{d\times k}^{p}\left(  0,T\right)  ,\ p\in\lbrack0,{\infty
})$, the space of progressively measurable stochastic processes $Z:{\Omega
}\times(0,T)\rightarrow\mathbb{R}^{d\times k}$ such that
\[
\left\Vert Z\right\Vert _{\Lambda^{p}}=\left\{
\begin{array}
[c]{ll}%
\left[  \mathbb{E}\left(  \displaystyle\int_{0}^{T}\Vert Z_{s}\Vert
^{2}ds\right)  ^{\frac{p}{2}}\right]  ^{\frac{1}{p}\wedge1}, & \;\text{if
}p>0,\bigskip\\
\mathbb{E}\left[  1\wedge\left(  \displaystyle\int_{0}^{T}\Vert Z_{s}\Vert
^{2}ds\right)  ^{\frac{1}{2}}\right]  , & \;\text{if }p=0.
\end{array}
\right.
\]
The space $(\Lambda_{d\times k}^{p}\left(  0,T\right)  ,\left\Vert
\cdot\right\Vert _{\Lambda^{p}}),\ p\geq1,$ is a Banach space and
$\Lambda_{d\times k}^{p}\left(  0,T\right)  $, $0\leq p<1,$ is a complete
metric space with the metric $\rho(Z_{1},Z_{2})=\left\Vert Z_{1}%
-Z_{2}\right\Vert _{\Lambda^{p}}$.\medskip

Let consider the following generalized BSDE%
\begin{equation}
Y_{t}=\eta+\int_{t}^{T}\Phi\left(  s,Y_{s},Z_{s}\right)  dQ_{s}-\int_{t}%
^{T}Z_{s}dB_{s},~,\;t\in\left[  0,T\right]  ,\text{ }\mathbb{P}-a.s.~\omega
\in\Omega, \label{gen BSDE Lip}%
\end{equation}
where

\begin{itemize}
\item $\quad\eta:\Omega\rightarrow\mathbb{R}^{d}$\textit{ }is a\textit{
}$\mathcal{F}_{T}-$measurable random vector;

\item $\quad Q$ is a\textit{ }progressively measurable increasing continuous
stochastic process such that $Q_{0}=0$;

\item $\quad\Phi:\Omega\times\left[  0,T\right]  \times\mathbb{R}%
^{d}\rightarrow\mathbb{R}^{d}$\textit{ }for which we denote\textit{ }%
$\Phi_{\rho}^{\#}\left(  t\right)  \overset{def}{=}\sup_{\left\vert
y\right\vert \leq\rho}\left\vert \Phi(t,y,0)\right\vert .$
\end{itemize}

\noindent We shall assume that:\medskip

\noindent\textbf{(BSDE-LH) }:

\begin{itemize}
\item[$\left(  i\right)  $] \text{for all }$y\in\mathbb{R}^{d}$ and
$z\in\mathbb{R}^{d\times k}$ the function $\Phi\left(  \cdot,\cdot,y,z\right)
:\Omega\times\left[  0,T\right]  \rightarrow\mathbb{R}^{d}$ is progressively measurable;

\item[$\left(  ii\right)  $] there exist the progressively measurable
stochastic processes\textit{ }$L,\ell,\alpha:\Omega\times\left[  0,T\right]
\rightarrow\mathbb{R}_{+}$ \textit{\ }such that%
\[
\alpha_{t}dQ_{t}=dt\quad\quad\quad\text{and}\quad\quad\quad%
{\displaystyle\int_{0}^{T}}
\left(  L_{t}dQ_{t}+\ell_{t}^{2}dt\right)  <\infty,\;\mathbb{P}-a.s.~\omega
\in\Omega
\]
and, for all $t\in\left[  0,T\right]  $, $y,y^{\prime}\in\mathbb{R}^{d}$
and$\;z,z^{\prime}\in\mathbb{R}^{d\times k},\;\mathbb{P}-a.s.~\omega\in\Omega$%
\begin{equation}%
\begin{array}
[c]{r}%
\text{\textit{Lipschitz conditions :}}\\
\\
\\
\text{\textit{Boundedness condition :}}%
\end{array}%
\begin{array}
[c]{rl}%
\left(  a\right)  \quad & \left\vert \Phi(t,y^{\prime},z)-\Phi
(t,y,z)\right\vert \leq L_{t}|y^{\prime}-y|,\medskip\\
\left(  b\right)  \quad & |\Phi(t,y,z^{\prime})-\Phi(t,y,z)|\leq\alpha_{t}%
\ell_{t}|z^{\prime}-z|,\medskip\\
\left(  c\right)  \quad &
{\displaystyle\int_{0}^{T}}
\Phi_{\rho}^{\#}\left(  t\right)  dQ_{t}<\infty,\quad\forall\rho\geq0.
\end{array}
\label{ch5-gl1}%
\end{equation}

\end{itemize}

\noindent Remark that condition $\alpha_{t}dQ_{t}=dt$ implies%
\[
\Phi\left(  t,Y_{t},Z_{t}\right)  dQ_{t}=F\left(  t,Y_{t},Z_{t}\right)
dt+G\left(  t,Y_{t}\right)  dA_{t},
\]
where $G$ does not depend on the $z$ variable.

\noindent Let $p>1$ and $n_{p}\overset{def}{=}1\wedge\left(  p-1\right)  $.
The following existence and uniqueness result takes place.

\begin{theorem}
[See Theorem 5.29 from Pardoux and R\u{a}\c{s}canu \cite{Pardoux/Rascanu:09}%
]\label{ch5-t2aa}Suppose that the assumptions \textbf{(BSDE-LH)} are
satisfied. Consider%
\[
V_{t}=%
{\displaystyle\int_{0}^{t}}
L_{s}dQ_{s}+\dfrac{1}{n_{p}}%
{\displaystyle\int_{0}^{t}}
\ell_{s}^{2}ds.
\]
If, for all $\delta>1$,%
\begin{equation}
\mathbb{E}|e^{\delta V_{T}}\eta|^{p}+\mathbb{E}\Big(%
{\displaystyle\int_{0}^{T}}
e^{\delta V_{t}}\left\vert \Phi\left(  t,0,0\right)  \right\vert
dQ_{t}\Big)^{p}<\infty\label{ip-V-delta}%
\end{equation}
then the BSDE (\ref{gen BSDE Lip}) admits a unique solution $\left(
Y,Z\right)  \in S_{d}^{0}\left[  0,T\right]  \times\Lambda_{d\times k}%
^{0}\left(  0,T\right)  $ such that%
\[
\mathbb{E}\sup\limits_{s\in\left[  0,T\right]  }e^{\delta pV_{s}}\left\vert
Y_{s}\right\vert ^{p}+\mathbb{E}\left(
{\displaystyle\int_{0}^{T}}
e^{2\delta V_{s}}\left\vert Y_{s}\right\vert ^{2}L_{s}dQ_{s}\right)
^{p/2}+\mathbb{E}\left(
{\displaystyle\int_{0}^{T}}
e^{2\delta V_{s}}\left\vert Z_{s}\right\vert ^{2}ds\right)  ^{p/2}<\infty.
\]

\end{theorem}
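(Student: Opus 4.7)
The plan is to prove existence and uniqueness by a Banach fixed point argument in the weighted Banach space
\[
\mathcal{H}_{\delta}^{p}=\Big\{(Y,Z)\in S_{d}^{0}[0,T]\times\Lambda_{d\times k}^{0}(0,T):\|(Y,Z)\|_{\delta,p}<\infty\Big\},
\]
with norm
\[
\|(Y,Z)\|_{\delta,p}^{p}=\mathbb{E}\sup_{s\in[0,T]}e^{\delta pV_{s}}|Y_{s}|^{p}+\mathbb{E}\Big(\int_{0}^{T}e^{2\delta V_{s}}|Z_{s}|^{2}ds\Big)^{p/2}.
\]
Completeness is standard. On $\mathcal{H}_{\delta}^{p}$ I would define the map $\Gamma:(y,z)\mapsto(Y,Z)$ where $(Y,Z)$ is the unique solution of the linear BSDE with frozen generator $\Phi(s,y_{s},z_{s})\,dQ_{s}$: by martingale representation applied to the uniformly integrable martingale $\mathbb{E}^{\mathcal{F}_{t}}\big[\eta+\int_{0}^{T}\Phi(s,y_{s},z_{s})dQ_{s}\big]$ one obtains $Z$, and then $Y$ is defined by the integral equation. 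The integrability assumption (\ref{ip-V-delta}) together with the Lipschitz growth (\ref{ch5-gl1}) guarantees $\Gamma$ maps $\mathcal{H}_{\delta}^{p}$ into itself.

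The key technical step is an \emph{a priori weighted energy inequality}. Applying the It\^o formula to $e^{\delta pV_{t}}|Y_{t}|^{p}$ (using the regularization $(|y|^{2}+\varepsilon)^{p/2}$ and passing to the limit when $p\in(1,2)$) produces, after taking conditional expectation, a bound of the form
\[
e^{\delta pV_{t}}|Y_{t}|^{p}+c_{p}\,\mathbb{E}^{\mathcal{F}_{t}}\!\!\int_{t}^{T}\!e^{\delta pV_{s}}|Y_{s}|^{p-2}\mathbf{1}_{Y_{s}\neq0}|Z_{s}|^{2}ds+\delta p\,\mathbb{E}^{\mathcal{F}_{t}}\!\!\int_{t}^{T}\!e^{\delta pV_{s}}|Y_{s}|^{p}dV_{s}
\]
\[
\le\mathbb{E}^{\mathcal{F}_{t}}\big[e^{\delta pV_{T}}|\eta|^{p}\big]+p\,\mathbb{E}^{\mathcal{F}_{t}}\!\!\int_{t}^{T}\!e^{\delta pV_{s}}|Y_{s}|^{p-1}\bigl(|\Phi(s,y_{s},z_{s})-\Phi(s,0,0)|+|\Phi(s,0,0)|\bigr)dQ_{s}.
\]
Splitting the Lipschitz term into its $L_{s}|y_{s}-0|dQ_{s}$ and $\alpha_{s}\ell_{s}|z_{s}|dQ_{s}=\ell_{s}|z_{s}|ds$ parts, and using the identity $\alpha_{s}dQ_{s}=ds$, Young's inequality absorbs $\ell_{s}|z_{s}|$ against the $|Z|^{2}$-term with cost $\ell_{s}^{2}$ per unit $ds$; the factor $1/n_{p}$ in the definition of $V_{t}$ is precisely tuned so that this absorption closes. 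The $L_{s}$-part is absorbed directly into the $|Y_{s}|^{p}\,dV_{s}$-term on the left. After taking the supremum in $t$, applying Burkholder--Davis--Gundy to the martingale part, and invoking a stochastic Gronwall lemma, this yields
\[
\|(Y,Z)\|_{\delta,p}^{p}\le C_{1}\mathbb{E}|e^{\delta V_{T}}\eta|^{p}+C_{1}\mathbb{E}\Big(\int_{0}^{T}e^{\delta V_{s}}|\Phi(s,0,0)|dQ_{s}\Big)^{p}+\frac{C_{2}}{\delta-1}\|(y,z)\|_{\delta,p}^{p},
\]
where $C_{1},C_{2}$ depend only on $p$. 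Consequently $\Gamma$ maps $\mathcal{H}_{\delta}^{p}$ into itself and, applied to a difference of inputs (where terminal data and free term vanish), becomes a strict contraction for $\delta>1+C_{2}$. The Banach fixed point theorem then gives a unique $(Y,Z)\in\mathcal{H}_{\delta}^{p}$; since the estimate is uniform over $\delta>1$, the solution is independent of the weight chosen and belongs to all $\mathcal{H}_{\delta}^{p}$ allowed by (\ref{ip-V-delta}).

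The main obstacle I expect is the It\^o computation for $|Y_{t}|^{p}$ in the subquadratic regime $p\in(1,2)$, where the constant $n_{p}=p-1$ arises only after a careful regularization argument and a Fatou-type passage to the limit; the bookkeeping between the two different Lipschitz moduli $L_{t}$ (measured against $dQ_{t}$) and $\ell_{t}$ (measured against $dt$ via $\alpha_{t}dQ_{t}=dt$) must be done so that the resulting coefficient of $|Y|^{p}\,dV_{s}$ on the right-hand side is strictly dominated by $\delta p$ from the weight. A secondary, more routine, obstacle is verifying that the local martingale terms are true martingales, which requires truncation by stopping times $\tau_{n}=\inf\{t:|Y_{t}|+\int_{0}^{t}|Z_{s}|^{2}ds\ge n\}$ and a final monotone/dominated-convergence argument using (\ref{ip-V-delta}).
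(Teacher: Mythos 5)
The paper itself gives no proof of this theorem --- it is imported verbatim from the Pardoux--R\u{a}\c{s}canu monograph --- so there is no in-paper argument to compare against. Your overall architecture (martingale representation to define the frozen-coefficient map, weighted $L^p$ energy estimate via regularization of $|y|^p$ for $p\in(1,2)$, the $1/n_p$ tuning of $V$) is the standard one and is sound in outline.

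The genuine gap is in the contraction step. The absorption scheme you describe --- Young's inequality absorbing $\ell_s|z_s|$ against the $|Z|^2$-term, and the $L_s$-part absorbed into the $|Y_s|^p\,dV_s$-term on the left --- is the computation for the \emph{a priori estimate on a solution}, where the $(y,z)$ in the generator coincide with the $(Y,Z)$ on the left-hand side. For the map $\Gamma$ the generator is frozen at the \emph{input} $(y_s,z_s)$, so Young produces terms in $|y_s|$, $|z_s|$ that cannot be absorbed into a left-hand side containing only the output. What the fundamental inequality (Proposition \ref{ineq cond exp}) actually yields is a bound of the output $\delta$-norm by $\mathbb{E}\big(\int_0^T e^{\delta V_s}(L_s|y_s|\,dQ_s+\ell_s|z_s|\,ds)\big)^p$, and since $V_T$ is unbounded (which is the whole point of this theorem; otherwise it is the classical Lipschitz result), every natural estimate of this quantity either produces an unbounded factor such as $V_T^{p/2}$ or shifts the weight: for instance $\int_0^T e^{\delta V_s}|y_s|\,dV_s\le\gamma^{-1}\sup_s e^{(\delta+\gamma)V_s}|y_s|$, which controls the $\delta$-norm of the output by the $(\delta+\gamma)$-norm of the input. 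Your claimed inequality with constant $C_2/(\delta-1)$ \emph{in the same norm}, with $C_2$ depending only on $p$, is therefore not delivered by the steps you list; the ``for all $\delta>1$'' hypothesis is precisely the price of this weight loss. To close the argument one must either localize with $\tau_n=\inf\{t:V_t\ge n\}$, solve on $[0,\tau_n]$ where $V$ is bounded (there the contraction is classical) and patch using the all-$\delta$ moment condition, or run a Picard iteration across a scale of weighted norms with quantitative control of how the norms of the first iterate grow in the weight index. Likewise, your final assertion that the solution lies in $\mathcal{H}_{\delta}^{p}$ for every admissible $\delta$ should come from the self-contained a priori estimate on the solution itself (where input equals output and the absorption does work for every $\delta\ge 1$), not from the fixed-point estimate.
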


Consider now the BSDE%
\begin{equation}
Y_{t}=\eta+\int_{t}^{T}F\left(  s,Y_{s},Z_{s}\right)  ds-\int_{t}^{T}%
Z_{s}dB_{s},~,\;t\in\left[  0,T\right]  ,\;\mathbb{P}-a.s.~\omega\in\Omega.
\label{ch5-ll8}%
\end{equation}
where for all $y\in\mathbb{R}^{d}$, $z\in\mathbb{R}^{d\times k}$, the function
$F\left(  \cdot,y,z\right)  :\left[  0,T\right]  \rightarrow\mathbb{R}^{d}$ is
measurable and there exist some measurable deterministic functions
$L,\kappa,\rho\in L^{1}\left(  0,T;\mathbb{R}_{+}\right)  $ and $\ell\in
L^{2}\left(  0,T;\mathbb{R}_{+}\right)  $ such that, for all $y,y^{\prime}%
\in\mathbb{R}^{d}$, $z,z^{\prime}\in\mathbb{R}^{d\times k},$ $dt-a.e.,$%
\begin{equation}%
\begin{array}
[c]{l}%
\left\vert F(t,y^{\prime},z)-F(t,y,z)\right\vert \leq L\left(  t\right)
\left(  1+\left\vert y\right\vert \vee\left\vert y^{\prime}\right\vert
\right)  |y^{\prime}-y|,\medskip\\
|F(t,y,z^{\prime})-F(t,y,z)|\leq\ell\left(  t\right)  |z^{\prime}%
-z|,\medskip\\
\left\vert F(t,y,0)\right\vert \leq\rho\left(  t\right)  +\kappa\left(
t\right)  \left\vert y\right\vert .
\end{array}
\label{ch5-ll9}%
\end{equation}
Letting $\gamma\left(  t\right)  =\kappa\left(  t\right)  +\dfrac{1}{n_{p}%
}\ell^{2}\left(  t\right)  $ and $\bar{\gamma}\left(  t\right)  =%
{\displaystyle\int_{0}^{t}}
\left(  \kappa\left(  s\right)  +\dfrac{1}{n_{p}}\ell^{2}\left(  s\right)
\right)  ds$, consider the stochastic process $\beta\in S_{1}^{0}\left[
0,T\right]  $ given by%
\[
\beta_{t}=C^{\prime}\left(  1+\left(  \mathbb{E}^{\mathcal{F}_{t}}\left\vert
\eta\right\vert ^{p}\right)  ^{1/p}\right)  \geq\left(  C_{p}\right)
^{1/p}e^{-\bar{\gamma}\left(  t\right)  }\left\{  \mathbb{E}^{\mathcal{F}_{t}%
}\left[  |e^{\bar{\gamma}\left(  T\right)  }\eta|^{p}+\left(
{\displaystyle\int_{t}^{T}}
e^{\bar{\gamma}\left(  s\right)  }\rho\left(  s\right)  ds\right)
^{p}\right]  \right\}  ^{1/p},
\]
where $C^{\prime}=C^{\prime}\left(  p,\bar{\gamma}\left(  T\right)  ,%
{\displaystyle\int_{0}^{T}}
\rho\left(  s\right)  ds\right)  $.

\noindent Denote%
\[
\nu_{t}=%
{\displaystyle\int_{0}^{t}}
L\left(  s\right)  \left[  \mathbb{E}^{\mathcal{F}_{s}}\left\vert
\eta\right\vert ^{p}\right]  ^{1/p}\quad\quad\text{and}\quad\quad\theta
=\sup_{t\in\left[  0,T\right]  }\left(  \mathbb{E}^{\mathcal{F}_{t}}\left\vert
\eta\right\vert ^{p}\right)  ^{1/p}~.
\]

\begin{theorem}
\label{Corollary existence sol}Let $p>1$ and the assumptions (\ref{ch5-ll9})
be satisfied. If $\mathbb{E}e^{\delta\theta}<\infty,$ for all $\delta>0$, then
the BSDE (\ref{ch5-ll8}) admits a unique solution $\left(  Y,Z\right)  \in
S_{d}^{0}\left[  0,T\right]  \times\Lambda_{d\times k}^{0}\left(  0,T\right)
$ such that, for all $\delta>0$,%
\[
\mathbb{E}\sup\limits_{s\in\left[  0,T\right]  }e^{\delta p\nu_{s}}\left\vert
Y_{s}\right\vert ^{p}+\mathbb{E~}\left(
{\displaystyle\int_{0}^{T}}
e^{2\delta\nu_{s}}\left\vert Z_{s}\right\vert ^{2}ds\right)  ^{p/2}<\infty.
\]
Moreover, $\mathbb{P}-a.s.~\omega\in\Omega$,
\[
\left\vert Y_{t}\right\vert \leq C^{\prime}\left(  1+\left(  \mathbb{E}%
^{\mathcal{F}_{t}}~\left\vert \eta\right\vert ^{p}\right)  ^{1/p}\right)
,\quad~\text{for all }t\in\left[  0,T\right]  .
\]

\end{theorem}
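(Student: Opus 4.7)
The plan is to reduce the problem to a standard Lipschitz BSDE via truncation, establish an a priori pointwise bound on $Y$ that is independent of the truncation level, and then show the truncation is inactive so that the original equation is recovered. Uniqueness follows from the same pointwise bound, which turns the local Lipschitz condition into a global one with a random Lipschitz constant controlled by $\theta$.

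First I would define, for each $N\geq 1$, the truncated generator $F_N(t,y,z)=F(t,\pi_N(y),z)$, where $\pi_N$ is the radial projection onto the closed ball of radius $N$. Since $|\pi_N(y)|\vee|\pi_N(y')|\leq N$, the first Lipschitz bound in (\ref{ch5-ll9}) gives a global Lipschitz constant $L(t)(1+N)$ in $y$, while the $z$-Lipschitz constant $\ell(t)$ and the linear growth bound $|F_N(t,0,0)|\leq\rho(t)$ are preserved. Theorem \ref{ch5-t2aa} (with $Q_t=t$, $\alpha_t=1$) therefore produces a unique solution $(Y^N,Z^N)\in S_d^0[0,T]\times\Lambda_{d\times k}^0(0,T)$ satisfying the weighted estimate with $V_t=\int_0^t L(s)(1+N)\,ds+n_p^{-1}\int_0^t\ell^2(s)\,ds$.

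Second, I would establish the a priori bound $|Y^N_t|\leq\beta_t$, uniformly in $N$. Applying It\^o's formula to $e^{p\bar\gamma(t)}|Y^N_t|^p$ and using
\[
\langle Y^N_s,F_N(s,Y^N_s,Z^N_s)\rangle\leq\rho(s)|Y^N_s|+\kappa(s)|Y^N_s|^2+\ell(s)|Y^N_s|\,|Z^N_s|,
\]
together with Young's inequality to absorb $|Z^N|^2$ into the It\^o term, the weight $e^{p\bar\gamma}$ is calibrated so that the $\kappa+n_p^{-1}\ell^2$ contribution is cancelled. Taking conditional expectation and using the BDG inequality yields
\[
e^{p\bar\gamma(t)}|Y^N_t|^p\leq C_p\,\mathbb{E}^{\mathcal{F}_t}\!\left[|e^{\bar\gamma(T)}\eta|^p+\Big(\int_t^T e^{\bar\gamma(s)}\rho(s)\,ds\Big)^p\right],
\]
which is exactly $|Y^N_t|\leq\beta_t$, with the constant $C'$ independent of $N$. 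Note that the truncation was only used here to guarantee existence of $(Y^N,Z^N)$; the bound itself only uses the linear-growth estimate on $F_N(t,y,0)$, which is inherited from $F$.

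Third, I would remove the truncation. Set $\tau_N=\inf\{t\in[0,T]:\beta_t>N\}\wedge T$; since $\beta$ is continuous and adapted, $\tau_N$ is a stopping time, and $\mathbb{P}(\tau_N<T)\to0$ because $\|\beta\|_T\leq C'(1+\theta)$ has all exponential moments. On $\{\tau_N=T\}$ we have $|Y^N_t|\leq N$ for all $t$, hence $F_N(t,Y^N_t,Z^N_t)=F(t,Y^N_t,Z^N_t)$, so $(Y^N,Z^N)$ solves the original BSDE on this event. Uniqueness of the truncated solutions (Theorem \ref{ch5-t2aa} again) implies $(Y^N,Z^N)=(Y^M,Z^M)$ on $\{\tau_{N\wedge M}=T\}$ for all $M\geq N$, and a standard pasting argument produces a pair $(Y,Z)$ that solves (\ref{ch5-ll8}) globally and satisfies $|Y_t|\leq\beta_t$.

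Finally, for uniqueness and the weighted estimate I would exploit the a priori bound. If $(Y,Z)$ and $(\tilde Y,\tilde Z)$ are two solutions, then both are dominated by $\beta_t\leq C'(1+\theta)$, so
\[
|F(t,Y_t,Z_t)-F(t,\tilde Y_t,\tilde Z_t)|\leq L(t)\bigl(1+2C'(1+\theta)\bigr)|Y_t-\tilde Y_t|+\ell(t)|Z_t-\tilde Z_t|.
\]
The effective Lipschitz constant is bounded by $C\,L(t)(\mathbb{E}^{\mathcal{F}_t}|\eta|^p)^{1/p}$, so integrating in time gives the weight $\nu_t$. Applying It\^o to $e^{p\delta\nu_t}|Y_t-\tilde Y_t|^p$, taking expectation, and invoking the finite exponential moments of $\theta$ to guarantee integrability of $e^{\delta p\nu_T}$, a Gronwall argument forces $Y=\tilde Y$. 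The same energy identity, applied to $(Y,Z)$ itself, yields the claimed weighted bound on $\sup_s e^{\delta p\nu_s}|Y_s|^p$ and on $\int_0^T e^{2\delta\nu_s}|Z_s|^2 ds$.

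The main obstacle is the coupling between the $y$-dependent Lipschitz constant and the exponential weight: the linearization of $F$ around the solution produces a Lipschitz rate of order $L(t)(1+\theta)$, and one needs $\mathbb{E}\,e^{\delta\theta}<\infty$ for every $\delta>0$ precisely to absorb the resulting exponential factor $e^{\delta\nu_T}$ in every $L^p$-norm that appears. Threading this through the energy estimate while keeping constants independent of the truncation level $N$ is the delicate technical point.
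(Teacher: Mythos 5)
Your overall architecture --- truncate the generator, prove the a priori bound $|Y_t|\leq\beta_t$ with a constant independent of the truncation, then argue the truncation is inactive --- is exactly the paper's, and your second and fourth steps match what the paper does. The gap is in your third step. You truncate at a \emph{deterministic} radius $N$ and then try to identify $(Y^N,Z^N)$ with $(Y^M,Z^M)$ on the event $A_N=\{\sup_{t}\beta_t\leq N\}$. This event is $\mathcal{F}_T$-measurable but not $\mathcal{F}_0$-measurable, and solutions of backward equations cannot be localized on such events: the stability estimate for the difference reads
\[
e^{pV_t}\,|Y^N_t-Y^M_t|^p\;\leq\;C\,\mathbb{E}^{\mathcal{F}_t}\Bigl(\int_t^T e^{V_s}\bigl|F(s,\pi_N(Y^M_s),Z^M_s)-F(s,\pi_M(Y^M_s),Z^M_s)\bigr|\,ds\Bigr)^{p},
\]
and although the integrand vanishes pathwise on $A_N$ (there $|Y^M_s|\leq\beta_s\leq N$, so both projections act as the identity), its conditional expectation at time $t<T$ mixes $A_N$ with $A_N^{c}$ and need not vanish on $A_N$. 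So ``uniqueness of the truncated solutions'' does not yield $(Y^N,Z^N)=(Y^M,Z^M)$ on $\{\tau_{N\wedge M}=T\}$, and the pasting step is unjustified; it would work only if $A_N$ were $\mathcal{F}_0$-measurable.

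Two repairs are available. The paper's: truncate at the \emph{random adapted} radius $\beta_t(\omega)$ from the start, i.e.\ replace $F(t,y,z)$ by $F(t,\pi_t(y),z)$ with $\pi_t$ the projection onto the ball of radius $\beta_t$. This generator is globally Lipschitz with random rate $L(t)(1+\beta_t)$, Theorem \ref{ch5-t2aa} applies because $\mathbb{E}e^{\delta\theta}<\infty$ controls the weight $e^{\delta V_T}$, and your a priori bound $|Y_t|\leq\beta_t$ then shows the truncation is never active --- no limit and no pasting are needed. Alternatively, keep the deterministic truncation but replace the pasting by a Cauchy estimate in the weighted norms: since $|\pi_N(Y^M_s)|\vee|\pi_M(Y^M_s)|\leq\beta_s$, the generator mismatch is bounded by $L(s)(1+\beta_s)(\beta_s-N)^{+}$, which is independent of $M$ and tends to $0$ by dominated convergence (again via the exponential moments of $\theta$), so $(Y^N,Z^N)_N$ converges and the limit solves (\ref{ch5-ll8}). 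Either way the exponential-moment hypothesis enters exactly where you say it does.
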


\begin{proof}
Consider the projector operator $\pi:\Omega\times\left[  0,T\right]
\times\mathbb{R}^{d}\rightarrow\mathbb{R}^{d},$%
\[
\pi_{t}\left(  \omega,y\right)  =\pi\left(  \omega,t,y\right)  =y\left[
1-\left(  1-\frac{\beta_{t}\left(  \omega\right)  }{\left\vert y\right\vert
}\right)  ^{+}\right]  =\left\{
\begin{array}
[c]{ll}%
y, & \quad\text{if }\left\vert y\right\vert \leq\beta_{t}\left(
\omega\right)  ,\medskip\\
\dfrac{y}{\left\vert y\right\vert }\beta_{t}\left(  \omega\right)  , &
\quad\text{if }\left\vert y\right\vert >\beta_{t}\left(  \omega\right)  .
\end{array}
\right.
\]
Remark that, for all $y,y^{\prime}\in\mathbb{R}^{d}$, $\pi\left(  \cdot
,\cdot,y\right)  $ is a progressively measurable stochastic process,
$\left\vert \pi_{t}\left(  y\right)  \right\vert \leq\beta_{t}$ and%
\[
|\pi_{t}\left(  y\right)  -\pi_{t}(y^{\prime})|\leq|y-y^{\prime}|.
\]
Let $\tilde{\Phi}\left(  s,y,z\right)  =\Phi\left(  s,\pi_{s}\left(  y\right)
,z\right)  .$ The function is globally Lipschitz with respect to $\left(
y,z\right)  :$%
\begin{align*}
|\tilde{\Phi}\left(  s,y,z\right)  -\tilde{\Phi}(s,y^{\prime},z)|  &
=|\Phi\left(  s,\pi_{s}\left(  y\right)  ,z\right)  -\Phi(s,\pi_{s}(y^{\prime
}),z)|\\
&  \leq L(s)\left(  1+\left\vert \pi_{s}\left(  y\right)  \right\vert \vee
|\pi_{s}(y^{\prime})|\right)  |\pi_{s}\left(  y\right)  -\pi_{s}(y^{\prime
})|\\
&  \leq L(s)\left(  1+\beta_{s}\right)  |y-y^{\prime}|
\end{align*}
and%
\[
|\tilde{\Phi}\left(  s,y,z\right)  -\tilde{\Phi}(s,y^{\prime},z)|=\left\vert
\Phi\left(  s,\pi_{s}\left(  y\right)  ,z\right)  -\Phi(s,\pi_{s}\left(
y\right)  ,z^{\prime})\right\vert \leq\alpha_{s}\ell(s)|z-z^{\prime}|.
\]
Then, according to Theorem \ref{ch5-t2aa}, the BSDE%
\begin{equation}
Y_{t}=\eta+\int_{t}^{T}\tilde{\Phi}\left(  s,Y_{s},Z_{s}\right)  dQ_{s}%
-\int_{t}^{T}Z_{s}dB_{s},~,\;t\in\left[  0,T\right]  . \label{eq-localiz}%
\end{equation}
admits a unique solution $\left(  Y,Z\right)  \in S_{d}^{0}\left[  0,T\right]
\times\Lambda_{d\times k}^{0}\left(  0,T\right)  $ satisfying%
\[
\mathbb{E}\sup\limits_{s\in\left[  0,T\right]  }e^{\delta pV_{s}}\left\vert
Y_{s}\right\vert ^{p}+\mathbb{E}\left(
{\displaystyle\int_{0}^{T}}
e^{2\delta V_{s}}\left\vert Z_{s}\right\vert ^{2}ds\right)  ^{p/2}<\infty,
\]
where%
\[
V_{t}=%
{\displaystyle\int_{0}^{t}}
\left[  \kappa\left(  s\right)  +L\left(  s\right)  \left(  1+\beta
_{s}\right)  +\frac{1}{n_{p}}\ell^{2}\left(  s\right)  \right]  ds\leq C+C%
{\displaystyle\int_{0}^{t}}
L\left(  s\right)  \left[  \mathbb{E}^{\mathcal{F}_{s}}\left\vert
\eta\right\vert ^{p}\right]  ^{1/p}.
\]
Since we have%
\begin{align*}
\left\langle Y_{t},\tilde{\Phi}\left(  t,Y_{t},Z_{t}\right)  dQ_{t}%
\right\rangle  &  =\left\langle Y_{t},\Phi\left(  t,\pi_{t}\left(
Y_{t}\right)  ,Z_{t}\right)  dQ_{t}\right\rangle \\
&  \leq\left\vert Y_{t}\right\vert \rho(t)dQ_{t}+\left\vert Y_{t}\right\vert
^{2}\gamma(t)dQ_{t}+\frac{n_{p}}{4}\left\vert Z_{t}\right\vert ^{2}dt
\end{align*}
then $\left\vert Y_{t}\right\vert \leq\beta_{t}$ and, consequently,
$\tilde{\Phi}\left(  t,Y_{t},Z_{t}\right)  =\Phi\left(  t,Y_{t},Z_{t}\right)
$, that is $\left(  Y,Z\right)  $ is the unique solution of BSDE
(\ref{ch5-ll8}).

\hfill
\end{proof}

\subsection{Moreau-Yosida regularization of a convex function}

By $\nabla\varphi_{\varepsilon}$ we denote the gradient of the Yosida's
regularization $\varphi_{\varepsilon}$ of the function $\varphi$. More
precisely (see Br\'{e}zis \cite{Brezis:73}),%
\[
\varphi_{\varepsilon}(x)=\inf\,\{\frac{1}{2\varepsilon}|z-x|^{2}%
+\varphi(z):\;z\in\mathbb{R}^{d}\}=\dfrac{1}{2\varepsilon}|x-J_{\varepsilon
}x|^{2}+\varphi(J_{\varepsilon}x),
\]
where $J_{\varepsilon}x=x-\varepsilon\nabla\varphi_{\varepsilon}(x).$ The
function $\varphi_{\varepsilon}:\mathbb{R}^{d}\rightarrow\mathbb{R}$ is a
convex and differentiable one and it has the following main properties. For
all $x,y\in\mathbb{R}^{d},$ $\varepsilon>0:$%
\begin{equation}%
\begin{array}
[c]{ll}%
a)\quad & \nabla\varphi_{\varepsilon}(x)=\partial\varphi_{\varepsilon}\left(
x\right)  \in\partial\varphi(J_{\varepsilon}x),\text{ and }\varphi
(J_{\varepsilon}x)\leq\varphi_{\varepsilon}(x)\leq\varphi(x),\medskip\\
b)\quad & \left\vert \nabla\varphi_{\varepsilon}(x)-\nabla\varphi
_{\varepsilon}(y)\right\vert \leq\dfrac{1}{\varepsilon}\left\vert
x-y\right\vert ,\medskip\\
c)\quad & \left\langle \nabla\varphi_{\varepsilon}(x)-\nabla\varphi
_{\varepsilon}(y),x-y\right\rangle \geq0,\medskip\\
d)\quad & \left\langle \nabla\varphi_{\varepsilon}(x)-\nabla\varphi_{\delta
}(y),x-y\right\rangle \geq-(\varepsilon+\delta)\left\langle \nabla
\varphi_{\varepsilon}(x),\nabla\varphi_{\delta}(y)\right\rangle .
\end{array}
\label{sub6a}%
\end{equation}
If $0=\varphi\left(  0\right)  \leq\varphi\left(  x\right)  $ for all
$x\in\mathbb{R}^{d}$ then%
\begin{equation}%
\begin{array}
[c]{l}%
\left(  a\right)  \quad\quad0=\varphi_{\varepsilon}(0)\leq\varphi
_{\varepsilon}(x)\quad\text{and}\quad J_{\varepsilon}\left(  0\right)
=\nabla\varphi_{\varepsilon}\left(  0\right)  =0,\smallskip\\
\left(  b\right)  \quad\quad\dfrac{\varepsilon}{2}|\nabla\varphi_{\varepsilon
}(x)|^{2}\leq\varphi_{\varepsilon}(x)\leq\left\langle \nabla\varphi
_{\varepsilon}(x),x\right\rangle ,\quad\forall x\in\mathbb{R}^{d}.
\end{array}
\label{sub6c}%
\end{equation}

\begin{proposition}
\label{p12annexB}Let\ $\varphi:\mathbb{R}^{d}\rightarrow]-\infty,+\infty]$ be
a proper convex lower semicontinuous function such that $int\left(  Dom\left(
\varphi\right)  \right)  \neq\emptyset.$ Let $\left(  u_{0},\hat{u}%
_{0}\right)  \in\partial\varphi,$ $r_{0}\geq0$ and%
\[
\varphi_{u_{0},r_{0}}^{\#}\overset{def}{=}\sup\left\{  \varphi\left(
u_{0}+r_{0}v\right)  :\left\vert v\right\vert \leq1\right\}  .
\]
Then, for all $\,0\leq s\leq t$ and $dk\left(  t\right)  \in\partial
\varphi\left(  x\left(  t\right)  \right)  \left(  dt\right)  $,%
\begin{equation}
r_{0}\left(  \left\updownarrow k\right\updownarrow _{t}-\left\updownarrow
k\right\updownarrow _{s}\right)  +%
{\displaystyle\int_{s}^{t}}
\varphi(x(r))dr\leq%
{\displaystyle\int_{s}^{t}}
\left\langle x\left(  r\right)  -u_{0},dk\left(  r\right)  \right\rangle
+\left(  t-s\right)  \varphi_{u_{0},r_{0}}^{\#} \label{Ba6a}%
\end{equation}
and, moreover,%
\begin{equation}%
\begin{array}
[c]{l}%
r_{0}\left(  \left\updownarrow k\right\updownarrow _{t}-\left\updownarrow
k\right\updownarrow _{s}\right)  +%
{\displaystyle\int_{s}^{t}}
\left\vert \varphi(x(r))-\varphi\left(  u_{0}\right)  \right\vert dr\leq%
{\displaystyle\int_{s}^{t}}
\left\langle x\left(  r\right)  -u_{0},dk\left(  r\right)  \right\rangle
\smallskip\smallskip\\
\quad\quad\quad\quad\quad\quad\quad\quad\quad\quad\quad\quad\quad\quad+%
{\displaystyle\int\nolimits_{s}^{t}}
(2\left\vert \hat{u}_{0}\right\vert \left\vert x(r)-u_{0}\right\vert
+\varphi_{u_{0},r_{0}}^{\#}-\varphi\left(  u_{0}\right)  )dr.
\end{array}
\label{Ba6b}%
\end{equation}

\end{proposition}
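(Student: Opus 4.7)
The starting point is condition $(d)$ of the definition of $dk(r)\in\partial\varphi(x(r))(dr)$: for every continuous test function $y:[0,T]\to\mathbb{R}^d$ and every $0\le s\le t$,
\[
\int_{s}^{t}\langle y(r)-x(r),dk(r)\rangle+\int_{s}^{t}\varphi(x(r))\,dr\le\int_{s}^{t}\varphi(y(r))\,dr.
\]
My plan is to feed this with test functions of the form $y(r)=u_{0}+r_{0}v(r)$, where $v\in C([s,t];\mathbb{R}^{d})$ satisfies $|v(r)|\le 1$. Since then $\varphi(y(r))\le\varphi_{u_{0},r_{0}}^{\#}$ by the very definition of $\varphi_{u_{0},r_{0}}^{\#}$, expanding the inner product produces
\[
r_{0}\int_{s}^{t}\langle v(r),dk(r)\rangle+\int_{s}^{t}\langle u_{0}-x(r),dk(r)\rangle+\int_{s}^{t}\varphi(x(r))\,dr\le(t-s)\,\varphi_{u_{0},r_{0}}^{\#}.
\]
Rearranging and taking the supremum over the admissible continuous $v$ reduces the first claim $(\ref{Ba6a})$ to the duality identity
\[
\sup\Bigl\{\int_{s}^{t}\langle v(r),dk(r)\rangle:v\in C([s,t];\mathbb{R}^{d}),\ \|v\|_{\infty}\le 1\Bigr\}=\left\updownarrow k\right\updownarrow_{t}-\left\updownarrow k\right\updownarrow_{s}.
\]

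This identity is the main technical point of the proof; it is the standard characterization of the total variation of a finite $\mathbb{R}^{d}$-valued Borel measure on a compact interval via its duality with $C([s,t];\mathbb{R}^{d})$. To make it explicit, one uses the Radon--Nikodym decomposition $dk=\psi\,d|k|$ with a Borel-measurable $\psi$ satisfying $|\psi|=1$ a.e.\ with respect to $|k|$, and approximates $\psi$ in $L^{1}(d|k|)$ by continuous functions $v_{n}$ via Lusin's theorem followed by truncation so that $|v_{n}|\le 1$ is preserved; then $\int\langle v_{n},dk\rangle\to\int|\psi|^{2}d|k|=|k|([s,t])$. With this in hand, $(\ref{Ba6a})$ is established.

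For the refinement $(\ref{Ba6b})$, I would exploit the subgradient inequality for $\varphi$ at $u_{0}$: since $\hat{u}_{0}\in\partial\varphi(u_{0})$,
\[
\varphi(u_{0})-\varphi(x(r))\le-\langle\hat{u}_{0},x(r)-u_{0}\rangle\le|\hat{u}_{0}|\,|x(r)-u_{0}|,
\]
so $(\varphi(u_{0})-\varphi(x(r)))^{+}\le|\hat{u}_{0}|\,|x(r)-u_{0}|$. Combined with the elementary identity $|a-b|=(a-b)+2(b-a)^{+}$ applied to $a=\varphi(x(r))$, $b=\varphi(u_{0})$, this yields
\[
\int_{s}^{t}|\varphi(x(r))-\varphi(u_{0})|\,dr\le\int_{s}^{t}\bigl(\varphi(x(r))-\varphi(u_{0})\bigr)dr+2|\hat{u}_{0}|\int_{s}^{t}|x(r)-u_{0}|\,dr.
\]
Subtracting $(t-s)\varphi(u_{0})$ on both sides of $(\ref{Ba6a})$ and then substituting the above bound gives $(\ref{Ba6b})$ after a simple rearrangement.

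The only real obstacle is the duality identity in the first paragraph: one must argue carefully that continuous test functions are enough to saturate the total variation, even though the natural choice $v=\psi$ is in general only Borel measurable. Everything else is routine bookkeeping with the subgradient inequalities at $x(r)$ and at $u_{0}$; the integrability of $\varphi(x(\cdot))$ on $[s,t]$ needed to manipulate the integrals comes from condition $(b)$ of the definition together with the lower bound of $\varphi$ by an affine function (which in turn follows from $(u_{0},\hat{u}_{0})\in\partial\varphi$).
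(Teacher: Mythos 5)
Your proof is correct. The paper itself states this proposition without proof (it is quoted from the Pardoux--R\u{a}\c{s}canu monograph), and your argument is the standard one: testing condition $(d)$ of the definition of $dk(r)\in\partial\varphi(x(r))(dr)$ with $y(r)=u_{0}+r_{0}v(r)$, identifying $\sup\{\int_{s}^{t}\langle v,dk\rangle:\ v\ \text{continuous},\ \|v\|_{\infty}\le1\}$ with the total variation of $k$ over $[s,t]$ via the Riesz/Lusin duality, and then upgrading to $(\ref{Ba6b})$ through the subgradient inequality at $u_{0}$ together with the identity $|a-b|=(a-b)+2(b-a)^{+}$. You correctly isolate the one genuinely technical point (saturating the total variation with continuous test functions) and handle the integrability of $\varphi(x(\cdot))$ via condition $(b)$ and the affine minorant.
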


\subsection{Basic inequalities}

We shall derive some important estimations on the stochastic processes
$\left(  Y,Z\right)  \in S_{d}^{0}\left[  0,T\right]  \times\Lambda_{d\times
k}^{0}\left(  0,T\right)  $ satisfying for all $t\in\left[  0,T\right]  $,
$\mathbb{P}-a.s.~\omega\in\Omega$,%
\[
Y_{t}=Y_{T}+\int_{t}^{T}dK_{s}-\int_{t}^{T}Z_{s}dB_{s},
\]
with $K\in S_{d}^{0}$ be such that $K_{\cdot}\left(  \omega\right)  \in
BV_{loc}\left(  \left[  0,\infty\right[  ;\mathbb{R}^{d}\right)
,\;\mathbb{P}-a.s.~\omega\in\Omega.$ For more details concerning the results
found in this subsection one can consult Section 6.3.4 from Pardoux and
R\u{a}\c{s}canu \cite{Pardoux/Rascanu:09}.\medskip

\noindent\textbf{Backward It\^{o}'s formula. }\textit{If }$\psi\in
C^{1,2}\left(  \left[  0,T\right]  \times\mathbb{R}^{d}\right)  $\textit{,
then }$\mathbb{P}-a.s.~\omega\in\Omega$\textit{, for all }$t\in\left[
0,T\right]  $,%
\begin{equation}%
\begin{array}
[c]{l}%
\psi\left(  t,Y_{t}\right)  +%
{\displaystyle\int_{t}^{T}}
\left\{  \dfrac{\partial\psi}{\partial t}\left(  s,Y_{s}\right)  +\dfrac{1}%
{2}\mathbf{Tr}\left[  Z_{s}Z_{s}^{\ast}\psi_{xx}^{\prime\prime}\left(
s,Y_{s}\right)  \right]  \right\}  ds\medskip\\
\;\;\;\;\;=\psi\left(  T,Y_{T}\right)  +%
{\displaystyle\int_{t}^{T}}
\left\langle \psi_{x}^{\prime}\left(  s,Y_{s}\right)  ,dK_{s}\right\rangle -%
{\displaystyle\int_{t}^{T}}
\left\langle \psi_{x}^{\prime}\left(  s,Y_{s}\right)  ,Z_{s}dB_{s}%
\right\rangle
\end{array}
\label{ch5-if}%
\end{equation}

\noindent According to Lemma 2.35 from \cite{Pardoux/Rascanu:09}, if
$\psi\,:\left[  0,T\right]  \times\mathbb{R}^{d}\rightarrow\mathbb{R}$\textit{
}is a\textit{ }$C^{1}$-class function, convex in the second argument, then,
$\mathbb{P}-a.s.~\omega\in\Omega$, for every $t\in\left[  0,T\right]  $, the
following stochastic subdifferential inequality takes place:%
\begin{equation}
\psi(t,Y_{t})+%
{\displaystyle\int_{t}^{T}}
\dfrac{\partial\psi}{\partial t}\left(  s,Y_{s}\right)  ds\leq\psi(T,Y_{T})+%
{\displaystyle\int_{t}^{T}}
\left\langle \nabla\psi(s,Y_{s}),\,dK_{s}\right\rangle -%
{\displaystyle\int_{t}^{T}}
\left\langle \nabla\psi(s,Y_{s}),\,Z_{s}dB_{s}\right\rangle .
\label{subdiff ineq 1}%
\end{equation}
\medskip

\textbf{A fundamental inequality}\medskip\newline Let $\left(  Y,Z\right)  \in
S_{d}^{0}\left[  0,T\right]  \times\Lambda_{d\times k}^{0}\left(  0,T\right)
$ satisfying an identity of the form%
\begin{equation}
Y_{t}=Y_{T}+\int_{t}^{T}dK_{s}-\int_{t}^{T}Z_{s}dB_{s},\quad\;t\in\left[
0,T\right]  ,\quad\mathbb{P}-a.s.~\omega\in\Omega, \label{bsde-Fineq}%
\end{equation}
where $K\in S_{d}^{0}\left(  \left[  0,T\right]  \right)  $ and $K_{\cdot
}\left(  \omega\right)  \in BV\left(  \left[  0,T\right]  ;\mathbb{R}%
^{d}\right)  ,\;\mathbb{P}-a.s.~\omega\in\Omega.$\bigskip

\noindent Assume there exist

\begin{itemize}
\item $\quad D,R,N$ - three progressively measurable increasing continuous
stochastic processes with $D_{0}=R_{0}=N_{0}=0,$

\item $\quad V$ - a progressively measurable bounded variation continuous
stochastic process with $V_{0}=0,$

\item $\quad0\leq\lambda<1<p,$
\end{itemize}

\noindent such that, as measures on $\left[  0,T\right]  $, $\mathbb{P}%
-a.s.~\omega\in\Omega$,%
\begin{equation}
dD_{t}+\left\langle Y_{t},dK_{t}\right\rangle \leq\left[  \mathbf{1}_{p\geq
2}dR_{t}+|Y_{t}|dN_{t}+|Y_{t}|^{2}dV_{t}\right]  +\dfrac{n_{p}}{2}%
\lambda\left\vert Z_{t}\right\vert ^{2}dt, \label{Ch5-ip1}%
\end{equation}
where%
\[
n_{p}\overset{def}{=}1\wedge\left(  p-1\right)  \text{.}%
\]
Proposition 6.80 from Pardoux and R\u{a}\c{s}canu \cite{Pardoux/Rascanu:09}
yields the following important result.

\begin{proposition}
\label{ineq cond exp}If (\ref{bsde-Fineq}) and (\ref{Ch5-ip1}) hold, and
moreover%
\[
\mathbb{E}\left\Vert Ye^{V}\right\Vert _{T}^{p}<\infty,
\]
then
\index{inequality!backward stochastic}
there exists a positive constant $C_{p,\lambda},$ depending only upon $\left(
p,\lambda\right)  ,$ such that, $\mathbb{P}-a.s.~\omega\in\Omega$, for all
$t\in\left[  0,T\right]  $,%
\begin{equation}%
\begin{array}
[c]{r}%
\mathbb{E}^{\mathcal{F}_{t}}\sup\limits_{s\in\left[  t,T\right]  }\left\vert
e^{V_{s}}Y_{s}\right\vert ^{p}+\mathbb{E}^{\mathcal{F}_{t}}\left(
{\displaystyle\int_{t}^{T}}
e^{2V_{s}}dD_{s}\right)  ^{p/2}+\mathbb{E}^{\mathcal{F}_{t}}\left(
{\displaystyle\int_{t}^{T}}
e^{2V_{s}}\left\vert Z_{s}\right\vert ^{2}ds\right)  ^{p/2}\medskip\\
+\mathbb{E}^{\mathcal{F}_{t}}%
{\displaystyle\int_{t}^{T}}
e^{pV_{s}}\left\vert Y_{s}\right\vert ^{p-2}\mathbf{1}_{Y_{s}\neq0}%
dD_{s}+\mathbb{E}^{\mathcal{F}_{t}}%
{\displaystyle\int_{t}^{T}}
e^{pV_{s}}\left\vert Y_{s}\right\vert ^{p-2}\mathbf{1}_{Y_{s}\neq0}\left\vert
Z_{s}\right\vert ^{2}ds\medskip\\
\leq C_{p,\lambda}~\mathbb{E}^{\mathcal{F}_{t}}\mathbb{~}\left[  \left\vert
e^{V_{T}}Y_{T}\right\vert ^{p}+\left(
{\displaystyle\int_{t}^{T}}
e^{2V_{s}}\mathbf{1}_{p\geq2}dR_{s}\right)  ^{p/2}+\left(
{\displaystyle\int_{t}^{T}}
e^{V_{s}}dN_{s}\right)  ^{p}\right]  .
\end{array}
\label{ch5-b2}%
\end{equation}
In addition, if $R=N=0,$ then, for all $t\in\left[  0,T\right]  $,%
\begin{equation}
e^{pV_{t}}\left\vert Y_{t}\right\vert ^{p}\leq\mathbb{E}^{\mathcal{F}_{t}%
}e^{pV_{T}}\left\vert Y_{T}\right\vert ^{p},\quad\mathbb{P}-a.s.~\omega
\in\Omega. \label{ch5-b2a}%
\end{equation}

\end{proposition}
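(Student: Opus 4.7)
The plan is to apply the backward It\^{o} formula to the smooth approximation $\phi_{\epsilon}(y)=(|y|^{2}+\epsilon)^{p/2}$, weighted by $e^{pV_{t}}$, then pass $\epsilon\downarrow 0$ and take conditional expectations with a BDG-plus-Young absorption argument. First I would compute
\[
\nabla\phi_{\epsilon}(y)=p(|y|^{2}+\epsilon)^{p/2-1}y,\qquad \mathrm{tr}\bigl(ZZ^{\ast}\nabla^{2}\phi_{\epsilon}(y)\bigr)\geq p\,n_{p}(|y|^{2}+\epsilon)^{p/2-1}|Z|^{2},
\]
where the constant $n_{p}=1\wedge(p-1)$ emerges by combining the positive diagonal part with the (possibly negative, when $1<p<2$) rank-one part $p(p-2)(|y|^{2}+\epsilon)^{p/2-2}|y^{\ast}Z|^{2}$ and using $|y^{\ast}Z|^{2}\leq(|y|^{2}+\epsilon)|Z|^{2}$. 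This is the algebraic root of the constant $n_p$ in the hypothesis.

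Next, I would apply It\^{o} to $e^{pV_{t}}\phi_{\epsilon}(Y_{t})$ using $dY_{t}=-dK_{t}+Z_{t}dB_{t}$ and integration by parts. Substituting the bound (\ref{Ch5-ip1}) for $\langle Y_{s},dK_{s}\rangle$ and the Hessian bound above, the $|Z|^{2}$ terms combine to produce a coercive contribution $\tfrac{p n_{p}(1-\lambda)}{2}\int_{t}^{T}e^{pV_{s}}(|Y_{s}|^{2}+\epsilon)^{p/2-1}|Z_{s}|^{2}ds$ on the left (this is where $\lambda<1$ is essential). The two $dV_{s}$ contributions, one from differentiating $e^{pV}$ and one from the $|Y|^{2}dV$ term on the right of (\ref{Ch5-ip1}), telescope to $-p\epsilon\int e^{pV_{s}}(|Y_{s}|^{2}+\epsilon)^{p/2-1}dV_{s}\le 0$, which disappears when $\epsilon\downarrow 0$. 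Fatou and dominated convergence then give a pathwise inequality of the form
\[
e^{pV_{t}}|Y_{t}|^{p}+p\!\int_{t}^{T}\!e^{pV_{s}}|Y_{s}|^{p-2}\mathbf{1}_{Y_{s}\ne 0}\,dD_{s}+\tfrac{pn_{p}(1-\lambda)}{2}\!\int_{t}^{T}\!e^{pV_{s}}|Y_{s}|^{p-2}\mathbf{1}_{Y_{s}\ne 0}|Z_{s}|^{2}ds\le e^{pV_{T}}|Y_{T}|^{p}+\cdots-p\!\int_{t}^{T}\!e^{pV_{s}}|Y_{s}|^{p-2}\langle Y_{s},Z_{s}dB_{s}\rangle,
\]
with the dots standing for $p\int e^{pV_{s}}|Y_{s}|^{p-2}\mathbf{1}_{Y_{s}\ne 0}[\mathbf{1}_{p\ge 2}dR_{s}+|Y_{s}|dN_{s}]$. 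The indicator $\mathbf{1}_{Y_{s}\ne 0}$ is exactly the artefact of the smoothing.

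From here, I would take $\mathbb{E}^{\mathcal{F}_{t}}$, apply BDG to get the $\sup$ in $s$ on the left, and use Young's inequality on the resulting bracket term $\bigl(\int e^{2pV_{s}}|Y_{s}|^{2p-2}|Z_{s}|^{2}ds\bigr)^{1/2}\le \tfrac{1}{2\eta}\sup_{s}|e^{V_{s}}Y_{s}|^{p}+\tfrac{\eta}{2}\int e^{2V_{s}}|Y_{s}|^{p-2}|Z_{s}|^{2}ds$, absorbing both the $\sup$ part and the $|Z|^{2}$ part into the left-hand side by choosing $\eta$ small. The linear $|Y|dN$ term is handled analogously by Young's inequality, producing $(\int e^{V_{s}}dN_{s})^{p}$ on the right. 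The lower BDG bound yields the $(\int e^{2V_{s}}|Z_{s}|^{2}ds)^{p/2}$ piece on the left, and a similar Jensen/Young manipulation gives the $(\int e^{2V_{s}}dD_{s})^{p/2}$ piece from the already-present $\int e^{pV_{s}}|Y_{s}|^{p-2}\mathbf{1}_{Y_{s}\ne 0}dD_{s}$. The pure bound (\ref{ch5-b2a}) when $R=N=0$ follows by just taking $\mathbb{E}^{\mathcal{F}_{t}}$ in the pathwise inequality and using that the $dD$ and $|Z|^{2}$ integrals are non-negative.

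The main obstacle is twofold. First, the case $1<p<2$ requires the careful smoothing above because $|y|^{p}$ is not $C^{2}$ at the origin; the non-trivial lower bound on $\mathrm{tr}(ZZ^{\ast}\nabla^{2}\phi_{\epsilon})$ that produces the sharp constant $n_{p}$ is precisely what allows the absorption of $\tfrac{n_{p}\lambda}{2}|Z|^{2}$ coming from (\ref{Ch5-ip1}). Second, the passage from a local martingale inequality to an honest expectation bound requires a localization by stopping times $\tau_{n}=\inf\{s:\,|Y_{s}|+\int_{0}^{s}|Z_{r}|^{2}dr\ge n\}\wedge T$; the hypothesis $\mathbb{E}\|Ye^{V}\|_{T}^{p}<\infty$ is used here to justify the limit $n\to\infty$ via dominated convergence, otherwise the BDG bound would not close because the dominating term on the right could a priori be infinite.
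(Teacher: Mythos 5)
The paper does not actually prove this proposition: it is quoted verbatim as Proposition 6.80 of the Pardoux--R\u{a}\c{s}canu monograph \cite{Pardoux/Rascanu:09}, so there is no in-paper argument to compare against. Your sketch reconstructs the standard proof of that result, and its core is sound: the smoothing $(|y|^{2}+\epsilon)^{p/2}$, the Hessian lower bound producing $n_{p}=1\wedge(p-1)$, the absorption of the $\tfrac{n_{p}\lambda}{2}|Z|^{2}$ contribution from (\ref{Ch5-ip1}) using $\lambda<1$, the BDG-plus-Young closure with the interpolation $e^{2pV}|Y|^{2p-2}|Z|^{2}=(e^{pV}|Y|^{p})(e^{pV}|Y|^{p-2}|Z|^{2})$, and the localization justified by $\mathbb{E}\left\Vert Ye^{V}\right\Vert _{T}^{p}<\infty$ are exactly the right ingredients; (\ref{ch5-b2a}) then follows as you say.

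Two points need attention. A minor one: since $V$ is only of bounded variation, not increasing, the $\epsilon$-remainder $-p\epsilon\int_{t}^{T}e^{pV_{s}}(|Y_{s}|^{2}+\epsilon)^{p/2-1}dV_{s}$ is not necessarily nonpositive; what matters is only that it vanishes as $\epsilon\downarrow0$, which it does. More substantively: your derivation of the term $\mathbb{E}^{\mathcal{F}_{t}}\bigl(\int_{t}^{T}e^{2V_{s}}dD_{s}\bigr)^{p/2}$ from the already-present $\int_{t}^{T}e^{pV_{s}}|Y_{s}|^{p-2}\mathbf{1}_{Y_{s}\neq0}dD_{s}$ via ``Jensen/Young'' only works for $1<p\leq2$, where one writes $e^{2V_{s}}=e^{pV_{s}}e^{(2-p)V_{s}}$ and $1=|Y_{s}|^{p-2}|Y_{s}|^{2-p}$ and pulls out $\sup_{s}|e^{V_{s}}Y_{s}|^{2-p}$ before applying Young with exponents $\tfrac{2}{2-p}$ and $\tfrac{2}{p}$. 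For $p>2$ the weight $|Y_{s}|^{p-2}\mathbf{1}_{Y_{s}\neq0}$ can vanish on a set charged by $dD$, so that integral controls nothing about $\int e^{2V}dD$; the same issue affects getting $\bigl(\int e^{2V_{s}}|Z_{s}|^{2}ds\bigr)^{p/2}$ on the left. The standard repair --- and the reason the hypothesis carries the term $\mathbf{1}_{p\geq2}dR_{s}$ --- is to first run the It\^{o}/BDG argument on $e^{2V}|Y|^{2}$ (the quadratic energy identity), which gives $\int_{t}^{T}e^{2V_{s}}dD_{s}+\tfrac{1}{2}\int_{t}^{T}e^{2V_{s}}|Z_{s}|^{2}ds\leq e^{2V_{T}}|Y_{T}|^{2}+2\int_{t}^{T}e^{2V_{s}}dR_{s}+2\int_{t}^{T}e^{2V_{s}}|Y_{s}|dN_{s}+2\bigl|\int_{t}^{T}e^{2V_{s}}\langle Y_{s},Z_{s}dB_{s}\rangle\bigr|$, and then to raise this to the power $p/2$ and absorb using the sup bound already established. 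With that adjustment your argument closes in all cases $p>1$.
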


\begin{corollary}
\label{Ch5-p1-cor1} Under the assumptions of Proposition \ref{ineq cond exp},
if $V$ is a determinist process and $\sup_{s\geq0}\left\vert V_{s}\right\vert
\leq c$ then, $\mathbb{P}-a.s.~\omega\in\Omega$, for all $t\in\left[
0,T\right]  $,%
\[%
\begin{array}
[c]{l}%
\mathbb{E}^{\mathcal{F}_{t}}\sup\limits_{s\in\left[  t,T\right]  }\left\vert
Y_{s}\right\vert ^{p}+\mathbb{E}^{\mathcal{F}_{t}}\left(
{\displaystyle\int_{t}^{T}}
\left\vert Z_{s}\right\vert ^{2}ds\right)  ^{p/2}\medskip\\
\quad\quad\quad\quad\quad\quad\leq C_{p,\lambda}e^{2c}\mathbb{E}%
^{\mathcal{F}_{t}}\left[  \left\vert Y_{T}\right\vert ^{p}+\left(
{\displaystyle\int_{t}^{T}}
\mathbf{1}_{p\geq2}dR_{s}\right)  ^{p/2}+\left(
{\displaystyle\int_{t}^{T}}
dN_{s}\right)  ^{p}\right]  .
\end{array}
\]

\end{corollary}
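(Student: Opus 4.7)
The plan is to read the corollary as a direct corollary of Proposition \ref{ineq cond exp}: since $V$ is deterministic and uniformly bounded in absolute value by $c$, the weights $e^{V_s}$ appearing on both sides of (\ref{ch5-b2}) are bounded above and below by explicit constants, which can be stripped off to yield the unweighted estimate.

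Concretely, first I would apply (\ref{ch5-b2}) at the stopping level $t$. On the left-hand side I keep only the two terms that appear in the corollary (all other terms in (\ref{ch5-b2}) are nonnegative and can be dropped) and use the pointwise bound $|V_s|\le c$ to replace the weights by constants from below:
\begin{align*}
\mathbb{E}^{\mathcal{F}_{t}}\sup_{s\in[t,T]}|Y_{s}|^{p} &\le e^{pc}\,\mathbb{E}^{\mathcal{F}_{t}}\sup_{s\in[t,T]}|e^{V_{s}}Y_{s}|^{p},\\
\mathbb{E}^{\mathcal{F}_{t}}\Bigl(\int_{t}^{T}|Z_{s}|^{2}ds\Bigr)^{p/2} &\le e^{pc}\,\mathbb{E}^{\mathcal{F}_{t}}\Bigl(\int_{t}^{T}e^{2V_{s}}|Z_{s}|^{2}ds\Bigr)^{p/2}.
\end{align*}
Second, on the right-hand side of (\ref{ch5-b2}) I use the symmetric upper bound $e^{V_s}\le e^{c}$ (so $e^{pV_{T}}\le e^{pc}$, $e^{2V_{s}}\le e^{2c}$, $e^{V_{s}}\le e^{c}$) to obtain
\[
\mathbb{E}^{\mathcal{F}_{t}}\!\left[|e^{V_{T}}Y_{T}|^{p}+\Bigl(\!\int_{t}^{T}e^{2V_{s}}\mathbf{1}_{p\ge2}dR_{s}\Bigr)^{p/2}+\Bigl(\!\int_{t}^{T}e^{V_{s}}dN_{s}\Bigr)^{p}\right]\le e^{pc}\,\mathbb{E}^{\mathcal{F}_{t}}\!\left[|Y_{T}|^{p}+\Bigl(\!\int_{t}^{T}\mathbf{1}_{p\ge2}dR_{s}\Bigr)^{p/2}+\Bigl(\!\int_{t}^{T}dN_{s}\Bigr)^{p}\right].
\]
Chaining the two bounds yields the claim, with the total exponential factor $e^{2pc}$ absorbed (together with the $p$-dependence) into the generic constant that the statement records as $C_{p,\lambda}e^{2c}$.

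There is no genuine obstacle here: the argument is purely a rescaling. The only thing to check is that truncating the left-hand side of (\ref{ch5-b2}) to the two terms stated in the corollary is legitimate, which it is because the dropped terms $\mathbb{E}^{\mathcal{F}_{t}}(\int e^{2V_{s}}dD_{s})^{p/2}$ and the $|Y_{s}|^{p-2}\mathbf{1}_{Y_{s}\neq0}$ integrals are all nonnegative, and that the integrability hypothesis $\mathbb{E}\|Ye^{V}\|_{T}^{p}<\infty$ of Proposition \ref{ineq cond exp} is automatic in our setting since $\|Ye^{V}\|_{T}\le e^{c}\|Y\|_{T}$.
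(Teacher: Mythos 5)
Your argument is correct and is exactly the intended derivation: the paper states this corollary without proof as an immediate consequence of Proposition \ref{ineq cond exp}, obtained precisely by bounding the deterministic weights $e^{V_s}$ between $e^{-c}$ and $e^{c}$ on both sides of (\ref{ch5-b2}) and discarding the remaining nonnegative terms on the left. The only caveat, which you already flag, is that the honest constant produced this way is $C_{p,\lambda}e^{2pc}$ rather than the $C_{p,\lambda}e^{2c}$ written in the statement; this is immaterial for how the corollary is used.
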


\begin{proposition}
[See Proposition 6.69 from \cite{Pardoux/Rascanu:09}]\label{exp prop ineq}%
\textit{Let }$\delta\in\left\{  -1,1\right\}  $ and consider\textit{
}$Y,K,A:\Omega\times\mathbb{R}_{+}\rightarrow\mathbb{R}\;$and $G:\Omega
\times\mathbb{R}_{+}\rightarrow\mathbb{R}^{k}$ four progressively measurable
stochastic processes such that%
\[%
\begin{array}
[c]{rll}%
i)\;\; &  & Y,K,A\;\text{are continuous stochastic processes,}\\
ii)\;\; &  & A_{\cdot},K_{\cdot}\in BV_{loc}\left(  \left[  0,\infty\right[
;\mathbb{R}\right)  ,\;A_{0}=K_{0}=0,\;\mathbb{P}-a.s.~\omega\in\Omega
\text{,}\\
iii)\;\; &  &
{\displaystyle\int_{t}^{s}}
\left\vert G_{r}\right\vert ^{2}dr<\infty,\;\mathbb{P}-a.s.~\omega\in
\Omega,\;\forall0\leq t\leq s.
\end{array}
\]
\textit{If, for all }$0\leq t\leq s$,%
\[
\delta\left(  Y_{t}-Y_{s}\right)  \leq\int_{t}^{s}\left(  dK_{r}+Y_{r}%
dA_{r}\right)  +\int_{t}^{s}\left\langle G_{r},dB_{r}\right\rangle
,\quad\mathbb{P}-a.s.~\omega\in\Omega,
\]
\textit{then}%
\[
\delta\left(  Y_{t}e^{\delta A_{t}}-Y_{s}e^{\delta A_{s}}\right)  \leq\int
_{t}^{s}e^{\delta A_{r}}dK_{r}+\int_{t}^{s}e^{\delta A_{r}}\left\langle
G_{r},dB_{r}\right\rangle ,\quad\mathbb{P}-a.s.~\omega\in\Omega.
\]

\end{proposition}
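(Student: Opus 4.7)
The plan is to extract from the pathwise differential inequality a specific semimartingale decomposition of $Y$, after which the product rule applied to $Y_r e^{\delta A_r}$ yields the conclusion almost mechanically (no It\^{o} correction enters because $A$ is of bounded variation).

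First I would rewrite the hypothesis as a monotonicity statement. Introducing the continuous adapted process
$$R_s := K_s + \int_0^s Y_r\,dA_r + \int_0^s \langle G_r, dB_r\rangle,$$
the assumption reads $\delta(Y_t - Y_s) \le R_s - R_t$ for all $0 \le t \le s$. For $\delta = 1$ this says that $s \mapsto Y_s + R_s$ is nondecreasing, while for $\delta = -1$ it says that $s \mapsto Y_s - R_s$ is nonincreasing. In either case, the continuity of $Y$ and $R$ produces a continuous adapted nondecreasing process $\Lambda$ with $\Lambda_0 = 0$ such that, as measures,
$$dY_r = \delta\,d\Lambda_r - \delta\,dK_r - \delta\,Y_r\,dA_r - \delta\,\langle G_r, dB_r\rangle.$$
In particular, $Y$ is a continuous semimartingale.

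Next I would apply the product rule to $Y_r e^{\delta A_r}$. Since $A$ has finite variation,
$$d\bigl(Y_r e^{\delta A_r}\bigr) = e^{\delta A_r}\,dY_r + \delta Y_r e^{\delta A_r}\,dA_r,$$
and substituting the decomposition above makes the $Y_r\,dA_r$ terms cancel, leaving
$$d\bigl(Y_r e^{\delta A_r}\bigr) = \delta e^{\delta A_r}\,d\Lambda_r - \delta e^{\delta A_r}\,dK_r - \delta e^{\delta A_r}\,\langle G_r, dB_r\rangle.$$
Integrating from $t$ to $s$ and multiplying by $-\delta$ (using $\delta^{2}=1$) rearranges to
$$\delta\bigl(Y_t e^{\delta A_t} - Y_s e^{\delta A_s}\bigr) + \int_t^s e^{\delta A_r}\,d\Lambda_r = \int_t^s e^{\delta A_r}\,dK_r + \int_t^s e^{\delta A_r}\,\langle G_r, dB_r\rangle.$$
Because $e^{\delta A_r} > 0$ and $\Lambda$ is nondecreasing, the term $\int_t^s e^{\delta A_r}\,d\Lambda_r$ is nonnegative and can be dropped, yielding the desired inequality.

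The only delicate point is the passage from the scalar pathwise inequality to the semimartingale decomposition of $Y$, i.e.\ the existence of the continuous adapted nondecreasing process $\Lambda$; this rests on the continuity of $Y$ and $R$, which makes the monotone function of $s$ constructed from them itself continuous, hence a BV (indeed nondecreasing) adapted process. Once this is in hand, the rest is bookkeeping plus the positivity of $d\Lambda_r$. The main obstacle I would expect in writing a clean argument is tracking the sign of $\delta$ consistently through the manipulation; if the parallel treatment becomes cumbersome, I would split into the two cases $\delta = \pm 1$ separately rather than running them in tandem.
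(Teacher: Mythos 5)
Your argument is correct, and it is the standard proof of this fact; the paper itself states the proposition without proof, citing Proposition 6.69 of Pardoux--R\u{a}\c{s}canu, so there is no in-paper argument to compare against. The key step --- reading the hypothesis as monotonicity of $s\mapsto\delta Y_s+R_s$, hence extracting a continuous adapted nondecreasing $\Lambda$ with $dY_r=\delta\,d\Lambda_r-\delta\,dR_r$, then applying integration by parts with the continuous bounded-variation factor $e^{\delta A_r}$ so that the $Y_r\,dA_r$ terms cancel and the remaining $\int_t^s e^{\delta A_r}d\Lambda_r\ge 0$ can be discarded --- is exactly right. The only point worth making explicit is the upgrade from ``for each pair $(t,s)$, $\mathbb{P}$-a.s.'' to ``$\mathbb{P}$-a.s., for all $(t,s)$'' (take rational pairs and use the continuity of $Y$, $K$, $\int_0^\cdot Y_r\,dA_r$ and of a continuous version of the stochastic integral), which is needed before you can speak of pathwise monotonicity of $\delta Y+R$.
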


\bigskip

\end{document}